\newtheorem{theorem}{Theorem}[section]
\newtheorem{lemma}{Lemma}[section]
\newtheorem{corollary}{Corollary}[section]
\newtheorem{definition}{Definition}[section]
\newtheorem{question}{Question}[section]
\numberwithin{equation}{section}
\def\Z{\Bbb Z}
\def\R{\Bbb R}
\def\d{\partial}
\def\s{\sigma}
\def\e{\epsilon}
\def\b{\beta}
\def\g{\gamma}
\def\om{\omega}
\title{On Holographic Structures, Traversing Flows, and Exotic Spheres}
\author{Gabriel Katz}
\address{MIT, Department of Mathematics, 77 Massachusetts Ave., Cambridge, MA 02139, U.S.A.}
\email{gabkatz@gmail.com}
\begin{document}

\maketitle

\begin{abstract} Any traversally generic vector flow on a compact manifold $X$ with boundary leaves some residual structure on its boundary $\d X$. A part of this structure is the flow-generated causality map $C_v$, which takes a region of $\d X$ to the complementary region. By the Holography Theorem from \cite{K4}, the map $C_v$ allows to reconstruct $X$ together with the unparametrized flow. The reconstruction is a manifestation of holographic description of the flow. 

In the paper, we introduce and study the holographic structures on a given closed manifold $Y$, which mimics $\d X$. We generalize the Holography Theorem so that is stated in terms of fillable holographic structures on $Y$. Such structures are intimately linked with traversally generic vector flows on manifolds $X$ whose boundary is $Y$. We conclude with few observations about the richness of holographic structures on smooth exotic spheres.
\end{abstract}


\section{Basic facts about traversing vector fields and the holography they generate }

We start with a short review of different types of vector fields on smooth manifolds with boundary. 

Let $X$ be a connected compact smooth $(n+1)$-dimensional manifold with boundary. A smooth vector field $v$ on $X$ is called \emph{traversing} if it admits a Lyapunov function $F: X \to \R$ such that $dF(v) > 0$ everywhere in $X$. The trajectories of a traversing vector field are homeomorphic to closed segments or to singletons (see \cite{K1}).  

In \cite{K2}, we introduced a class of vector fields on $X$ which we call \emph{traversally generic}.  By Theorem 3.5 from \cite{K2}, the traversally generic vector fields form an open and dense set in the space of all traversing fields. Any trajectory $\g$ of a traversally generic vector field generates a finite sequence $\omega = (\omega_1, \dots , \omega_q)$
of natural numbers,  the entries of the sequence correspond to $v$-ordered points of the set $\g \cap \d X$.  Each point $x \in \g \cap \d X$ contributes \emph{the multiplicity of tangency} $j(x)$ of $\g$ to the boundary $\d X$ at $x$. The ordered list $\omega_\g$ of these multiplicities is \emph{the combinatorial type} of $\g$. For traversally generic vector fields, the combinatorial types $\omega_\g$ of their trajectories belong to an universal ($X$-independent) poset $\mathbf\Omega^\bullet_{'\langle n]}$ (see \cite{K3} for its definition and proprties). Figure 1 shows the poset $\mathbf\Omega^\bullet_{'\langle 2]}$.

Remarkably, for the traversally generic vector fields, the combinatorial type $\omega_\g$ determines the $v$-flow in the vicinity of $\g \subset X$. 

\begin{figure}[ht]\label{fig1.0}
\centerline{\includegraphics[height=2in,width=3.5in]{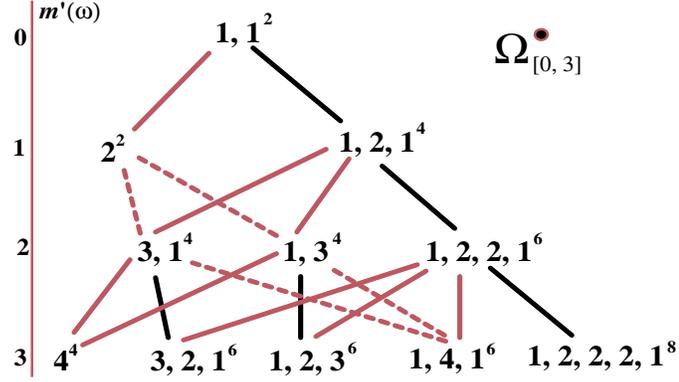}}
\bigskip
\caption{\small{The poset $\mathbf\Omega^\bullet_{'\langle 2]}$. The lines (both bold and dotted) that connect $\om$'s show the partial order in $\mathbf\Omega^\bullet_{'\langle 2]}$. 
The reduced norms $|\sim|'$ are plotted on the vertical axis, marked as $m'(\om)$. The upper indexes of $\om$'s represent their norms $|\sim|$. If two elements of $\mathbf\Omega^\bullet_{'\langle 2]}$ are connected by an edge or their chain, the element with a greater reduced norm is smaller. }} 
\end{figure}

Let $\g_x$ denote the $v$-trajectory through $x \in X$. Any traversing vector field $v$ on $X$ produces a so called \emph{causality map} $C_v$ which takes a portion $\d_1^+X(v)$ of the boundary $\d X$ to the closure $\d_1^-X(v)$ of the complementary portion. Here $\d_1^\pm X(v)$ stands for the locus in $\d X$, where $v$ is directed inward/outward of $X$ or is tangent  to $\d X$. By the definition, $C_v(x)$ is the point $y \in \d_1^-X(v)$ that resides in $\g_x \cap \d X$ above $x \in \d_1^+X(v)$. When no such $y$ exists, we put $C_v(x) = x$.  We stress that, in general, $C_v$ is a discontinuous map. \smallskip

The \emph{boundary generic} (see \cite{K1} for the definition) and traversing vector fields form a wider class than the traversally generic vector fields, the focus of this paper.\smallskip

For the reader convenience, let us state Theorem 3.1 from \cite{K4}. 

\begin{theorem}\label{th1.1}{\bf (The Holography Theorem)}
Let $X_1, X_2$ be two smooth compact connected $(n +1)$-manifolds with boundary, equipped with traversing boundary generic fields  $v_1, v_2$, respectively. 
\begin{itemize}
\item Then any smooth diffeomorphism  $\Phi^\d: \d X_1 \to \d X_2$, such that $$\Phi^\d \circ C_{v_1} =  C_{v_2} \circ \Phi^\d,$$ extends to a homeomorphism $\Phi: X_1 \to X_2$ which maps $v_1$-trajectories to $v_2$-trajectories so that the field-induced orientations of trajectories are preserved. The restriction of $\Phi$ to each trajectory is a smooth diffeomorphism. 

If the fields $v_1, v_2$ are traversally generic, then the conjugating homeomorphism $\Phi: X_1 \to X_2$ is a smooth diffeomorphism outside some closed subsets of codimension $3$.\smallskip

\item If each $v_1$-trajectory is either transversal to $\d X_1$ at \emph{some} point, or is simply tangent to $\d X_1$, then the homeomorphism $\Phi$ is a smooth diffeomorphism. In particular, this is the case for any concave vector field $v_1$. \smallskip
\hfill $\diamondsuit$
\end{itemize}
\end{theorem}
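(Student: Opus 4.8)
The plan is to reconstruct, from the boundary data alone, first the \emph{trajectory space} $\mathcal{T}(v_i) := X_i/v_i$ together with its quotient map $\Gamma_i \colon X_i \to \mathcal{T}(v_i)$, and then the manifold $X_i$ itself as a ``solid over $\mathcal{T}(v_i)$''. The starting observation is that a traversing field carries a Lyapunov function $F_i$ with $dF_i(v_i)>0$, so that $(\Gamma_i, F_i)$ embeds $X_i$ into $\mathcal{T}(v_i)\times\R$ with each trajectory sitting as a vertical closed segment; thus $X_i$ is determined once we know $\mathcal{T}(v_i)$, the incidence of $\d X_i$ with the trajectories, and the monotone parametrization along each fiber. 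My first step is therefore a \emph{combinatorial lemma}: the relation on $\d X_i$ declaring two boundary points equivalent when they lie on a common $v_i$-trajectory is encoded by the causality map $C_{v_i}$ alone. On each trajectory $\g$ the set $\g\cap\d X_i$ is a finite $F_i$-ordered list $x_1<\dots<x_k$ whose interior entries are tangency points (an interior transversal crossing would force $\g$ to leave $X_i$), and since each tangency point lies in $\d_1^+X_i(v_i)\cap\d_1^-X_i(v_i)$, iterating $C_{v_i}$ yields $x_1\mapsto x_2\mapsto\cdots\mapsto x_k$. Hence the orbit of $C_{v_i}$ through $x_1$ recovers the full intersection pattern, and so the fibers of $\Gamma_i$ over $\d X_i$.

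Granting this lemma, the hypothesis $\Phi^\d\circ C_{v_1}=C_{v_2}\circ\Phi^\d$ says precisely that the diffeomorphism $\Phi^\d$ carries the trajectory-equivalence relation of $v_1$ to that of $v_2$. By the universal property of the quotient, both $\Phi^\d$ and $(\Phi^\d)^{-1}$ descend to continuous maps, so $\Phi^\d$ induces a homeomorphism $\bar\Phi\colon\mathcal{T}(v_1)\to\mathcal{T}(v_2)$ of the (compact, Hausdorff) trajectory spaces. I then build $\Phi\colon X_1\to X_2$ fiber by fiber: a trajectory $\g$ over $\tau\in\mathcal{T}(v_1)$ is a closed segment whose endpoints and interior tangency points lie in $\d X_1$, where $\Phi^\d$ is already defined; I send $\g$ to the trajectory over $\bar\Phi(\tau)$ by the unique orientation-preserving monotone reparametrization agreeing with $\Phi^\d$ on $\g\cap\d X_1$. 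Because $\Phi^\d$ intertwines the causality maps, these fiberwise identifications are mutually consistent, the field-induced orientations are preserved, and along each fiber $\Phi$ is a smooth diffeomorphism by construction (an increasing diffeomorphism of segments matching finitely many marked points). This yields the first bullet's assertion modulo global continuity.

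The step I expect to be the main obstacle is exactly that \emph{global continuity} of the assembled $\Phi$, since the building blocks $C_{v_i}$ and $\Gamma_i$ are genuinely discontinuous. The discontinuities of $C_{v_i}$ concentrate on the trajectories tangent to $\d X_i$, i.e.\ over the images of the tangency strata $\d_j X_i(v_i)$, where the cardinality of $\g\cap\d X_i$ jumps, so nearby fibers are glued from different numbers of boundary arcs. The crucial point is that $\Phi^\d$, being a diffeomorphism conjugating $C_{v_1}$ to $C_{v_2}$, necessarily matches these tangency strata and the way $\d X_i$ folds onto the trajectory space; I would check continuity locally, in the normal form of a boundary generic field near a point of prescribed tangency multiplicity, and verify that the jumps of $C_{v_1}$ and of $C_{v_2}$ cancel under $\Phi$. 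Once $\Phi$ is a continuous bijection of compact Hausdorff spaces it is automatically a homeomorphism.

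Finally, for the smoothness refinements I would argue as follows. When every $v_1$-trajectory is either transversal to $\d X_1$ at some point or only simply tangent, the trajectory space $\mathcal{T}(v_1)$ is itself a smooth manifold with boundary and corners (arising from the fold and simple-tangency loci), and the fiberwise construction patches to a genuinely smooth map, giving the second bullet; the concave case is the special instance with no interior tangencies. In the general traversally generic case I instead invoke the quoted fact that the combinatorial type $\omega_\g$ determines the $v$-flow in a neighborhood of $\g$: away from a closed set where the combinatorial type is most degenerate---which a dimension count over the poset $\mathbf\Omega^\bullet_{'\langle n]}$ places in codimension $3$---these local normal forms for $v_1$ and $v_2$ are carried to one another by $\Phi$, upgrading it from a homeomorphism to a diffeomorphism on the complement.
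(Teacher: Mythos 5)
First, a caveat about what ``the paper's own proof'' means here: this paper contains \emph{no} proof of Theorem \ref{th1.1} at all --- the theorem is imported verbatim as Theorem 3.1 of \cite{K4}, and the present paper only uses it. So your proposal can only be judged on its own merits. Its first stage is sound: for a traversing field every trajectory is a closed segment meeting $\d X_i$, every interior point of $\g\cap\d X_i$ is a tangency point and hence lies in $\d_1^+X_i(v_i)\cap\d_1^-X_i(v_i)$, so iterating $C_{v_i}$ from the entry point does recover the ordered finite set $\g\cap\d X_i$; consequently the relation ``lie on a common trajectory'' is generated by $C_{v_i}$, and the conjugation hypothesis forces $\Phi^\d$ to induce a bijection of trajectory spaces (whose continuity, by the way, also needs the Hausdorffness of $\mathcal T(v_i)$, which you assume silently).

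The rest of the proposal has genuine gaps, and they sit exactly where the content of the theorem lies. (i) Your fiberwise map is not actually defined: there is no ``unique orientation-preserving monotone reparametrization'' of one closed segment onto another agreeing with $\Phi^\d$ on finitely many marked points --- there are infinitely many such maps. Any coherent choice (say, affine interpolation of flow time or of Lyapunov parameters between consecutive marked points) must then be shown to be smooth along each trajectory at the interior tangency points and, above all, continuous as the trajectory varies; nothing in the proposal does this. (ii) You explicitly defer the global continuity of $\Phi$ (``I would check continuity locally \dots\ verify that the jumps cancel''). Near a trajectory with a tangency of multiplicity at least $2$, nearby trajectories have different combinatorial types and different numbers of boundary intersections, and proving that the fiberwise-assembled $\Phi$ is continuous across such trajectories is the main work of \cite{K4}; announcing the verification is not performing it. (iii) Your argument for the second bullet rests on the claim that, under its hypotheses, $\mathcal T(v_1)$ is a smooth manifold with boundary and corners. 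That is false: a trajectory of combinatorial type $(121)$ satisfies those hypotheses (it is transversal at its endpoints), yet its local trajectory space is a branched, non-manifold space --- three half-open intervals glued at a common endpoint, times $\R^{n-1}$ --- because the fibers of the model projection on the two sides of the tangency value consist of one, respectively two, connected components. So smoothness of $\Phi$ cannot be deduced from smoothness of the quotient, and the actual argument must proceed differently. (iv) The codimension-$3$ assertion for traversally generic fields is reduced to an unspecified ``dimension count over the poset''; which strata must be excised and why their codimension is at least $3$ is never addressed. In sum, the proposal is a reasonable outline of the causal-holography strategy, but the steps it leaves unproved are precisely the theorem.
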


The hypotheses in the second bullet of Theorem \ref{th1.1} are perhaps superfluous: we conjecture that the conjugating homeomorphism $\Phi: X_1 \to X_2$ is always a diffeomorphism.\smallskip

The Holography Theorem provides us with a different window on compact smooth manifolds. In particular, it allows for a reconstruction of $X$ from some data on its boundary $\d X$. Thus the term ``holography". 

In particular, given a closed connected smooth $(n+1)$-manifold $Y$, we can delete a smooth $(n+1)$-ball $B$ from $Y$ to form a manifold $X$ with spherical boundary $S^n$. Then there exists a traversally generic vector field $v$ on $X$ such that $\d_1^+X(v) \subset S^n$ is the Northern hemisphere $S^n_+$, $\d_1^-X(v) \subset S^n$ is the Southern hemisphere $S^n_-$ (\cite{K4}). For such $v$, the causality map $C_v: S^n_+ \to S^n_-$ encodes complete topological information about $X$ and thus about $Y$. These facts will motivate us in what follows.

\section{Holographic Structures on Closed Manifolds}\label{section8.5}

Let $\mathcal F(v)$ denote the oriented 1-dimensional foliation, generated by a traversing vector field $v$ on a $(n+1)$-dimensional manifold $X$.  Theorem \ref{th1.1} tells us that the causality map $C_v: \d_1^+X(v) \to \d_1^-X(v)$ allows to restore the pair $(X, \mathcal F(v))$ up to a homeomorphism (often a diffeomorphism) of $X$ which is the identity map on the boundary $\d X$.

The word ``holography" in the name of the theorem is justified, since the information about the $(n+1)$-dimensional bulk $X$ and the $v$-flow on it is recorded via a map of two $n$-dimensional screens, $\d_1^+X(v)$ and $\d_1^-X(v)$. 

The reconstruction procedure $(\d X, C_v) \Rightarrow (X, \mathcal F(v))$ presumes that the manifold $X$ and the traversally generic field $v$ on it, giving rise to the boundary-confined data $(\d X, C_v)$, \emph{do exist}. 

For a traversally generic $v$, thanks to Lemma 3.4 from \cite{K2}, we have ``semi-local" models for the causality map $C_v$ in the vicinity of each $C_v$-``trajectory" $$\g^\d_x =_{\mathsf{def}} \big\{x, C_v(x), C_v(C_v(x)),\, \dots ,\,  C_v^{(d)}(x)\big\} \subset (\d X)^{d +1},$$ where all the iterative $C_v$-images of $x$ are distinct.
\smallskip

However, the question
\smallskip
 
\emph{``Which pairs $(\d X, C_v)$ arise from traversally generic fields $v$ on some manifold $X$?"} 
\smallskip

\noindent deserves attention and animates this paper. \smallskip

For a traversing $v$, we may collapse each trajectory to a singleton, thus generating a quotient map $\Gamma: X \to \mathcal T(v)$. We call the set $\mathcal T(v)$, equipped with the quotient topology, \emph{the trajectory space}. For traversally generic vector fields $v$, the spaces $\mathcal T(v)$ are compact $n$-dimensional $CW$-complexes, stratified by strata which are labeled by the elements $\omega \in \mathbf \mathbf\Omega^\bullet_{'\langle n]}$ (\cite{K4}). Moreover, for a traversally  generic $v$ on a $(n+1)$-dimensional $X$, we can model \emph{locally} the topological types of the $\mathbf \mathbf\Omega^\bullet_{' \langle n]}$-stratified trajectory spaces $\mathcal T(v)$ (\cite{K3}, Theorem 5.3). \smallskip

Again, the question \smallskip

``\emph{Which compact $\mathbf\mathbf\Omega^\bullet_{' \langle n]}$-stratified $CW$-complexes $\mathcal T$ can be produced as trajectory spaces $\mathcal T(v)$ of traversally  generic pairs $(X, v)$?}" 
\smallskip

\noindent remains open. \smallskip

In the definitions below, we will develop a language that is appropriate for answering these questions.  
\smallskip

First, we need to introduce some notations. With any $\omega = (\om_1, \dots, \om_q) \in  \mathbf\Omega^\bullet_{' \langle n]}$, we associate two quantities: the norm
$|\omega| = \sum_i \omega_i$
and the \emph{reduced} norm
$|\omega|' = \sum_i (\omega_i -1).$

The bizarre notation ``$\mathbf\Omega^\bullet_{' \langle n]}$" indicates that this poset is formed from a ``more universal" poset $\mathbf\Omega^\bullet$ by considering only its elements $\om$ whose reduced norms satisfy the constraint $|\om|' \leq n$. 
\smallskip

For any $\omega \in \mathbf\Omega^\bullet_{' \langle n]}$, consider the product $\R\times\R^{|\omega|'}\times\R^{n-|\omega|'} \approx \R^{n+1}$. We denote by $(u, \vec x, \vec y)$ a typical point in the product. We form a cylinder  
$$\Pi_\omega(\e, r) =_{\mathsf{def}} \{(u, \vec x, \vec y):\; \|\vec{x}\| < \e,\,  \|\vec{y}\| < r\} \subset \R\times\R^{|\omega |'}\times\R^{n-|\omega |'}$$ 
whose base is an open polydisk. We denote by $\pi$ the obvious projection of $\R\times\R^{|\omega|'}\times\R^{n-|\omega|'} $ onto $\R^{|\omega|'}\times\R^{n-|\omega|'} \approx \R^n$. 

We  also consider the model real polynomial 
\begin{eqnarray}\label{eq2.0}
P_\omega(u, \vec{x}) =_{\mathsf{def}} \prod_{i\, \in \,  sup(\omega)} \big[(u - i)^{\omega(i)} + \sum_{k = 0}^{\omega(i) -2} x_{i, k}(u - i)^k\big],
\end{eqnarray} 
where the variables $\vec x =_{\mathsf{def}} \{x_{i, k}\}$ are considered in the lexicographic order. 

In what follows, we assume that $\|\vec x\| < \e$, where $\e > 0$ has been chosen so small that all the real roots of $P_\omega(u, \vec{x})$ belong to a union of \emph{disjoint} intervals in $\R$ with the centers at the real roots $\{i\}$.  We use such $\e$ to pick an appropriate cylinder $\Pi_\omega(\e, r)$.  

For a traversally generic vector field on $X$, the polynomial $P_\omega(u, \vec{x})$ is instrumental in modeling the vicinity of each trajectory $\g$ of the combinatorial type $\om$ (\cite{K2}). 

To clarify this claim, let us consider the two types of semi-algebraic sets:  $$\mathsf Z_\omega =_{\mathsf{def}} \{P_\omega(u, \vec{x}) \leq 0\} \cap \Pi_\omega(\e, r) \; \subset \R\times \R^n$$
$$\mathsf W_\omega =_{\mathsf{def}} \{P_\omega(u, \vec{x}) = 0\} \cap \Pi_\omega(\e, r) \; \subset \R\times \R^n.$$
Here $\mathsf Z_\omega$ models $X$ in the vicinity of a $v$-trajectory $\g \subset X$ of the combinatorial type $\om$, $\mathsf W_\omega$ models $\d X$, and the constant vector field $\d_u$ - the traversally generic vector field $v$. In fact, the existence of such special coordinates $(u, \vec x, \vec y)$ in the vicinity of each $v$-trajectory of the combinatorial type $\om$ may serve as a \emph{working definition} of a traversally generic vector field. 

The local types of traversally generic fields on $3$-folds are shown in Figure 1. 
\smallskip

\begin{figure}[ht]\label{fig1.1}
\centerline{\includegraphics[height=4in,width=4.5in]{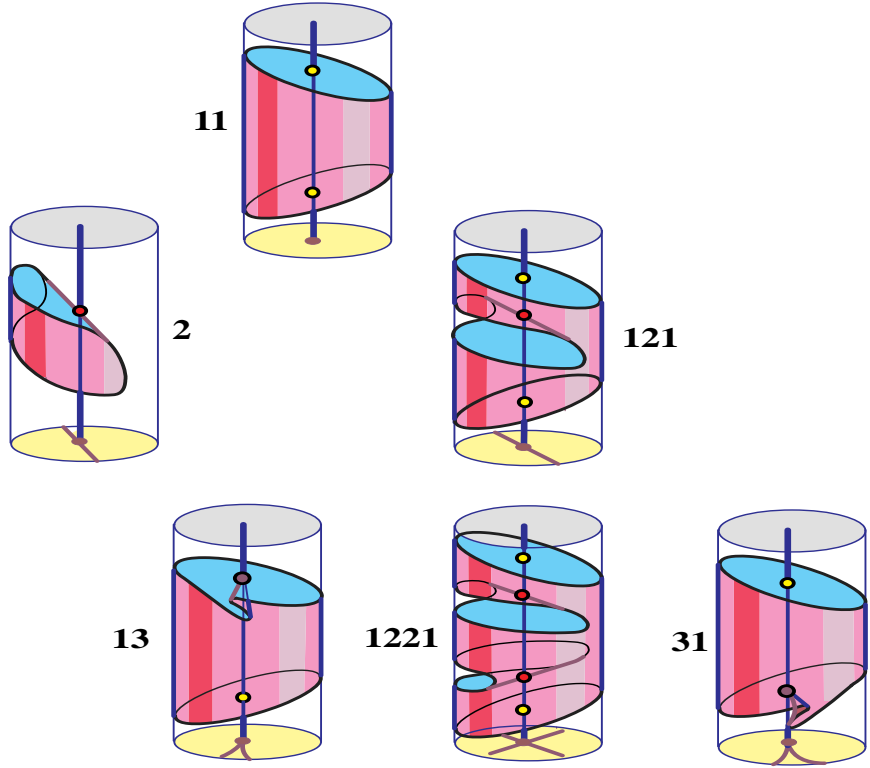}}
\bigskip
\caption{\small{Standard blocks from which holographic structures on closed surfaces are assembled. The rows of integers indicate the combinatorial patterns $\om \in \mathbf\Omega^\bullet_{' \langle 2]}$ of the ``virtual core trajectory", the axis of each cylinder.}} 
\end{figure}

We assume that $\R\times \R^n$ is oriented; from now and on, this orientation is fixed.  The nonsingular hypersurface $\mathsf W_\omega  \subset \R\times \R^n$ \emph{is oriented} with the help of the normal gradient vector field $\nabla (P_\omega(u, \vec{x}))$ and the preferred orientation of $\R\times \R^n$. 
\smallskip

We denote by $\pi_\omega: \mathsf Z_\omega \to \R^n$ the obvious projection. For each vector $\mathbf x =_{\mathsf{def}} (\vec{x}, \vec{y}) \in \R^n$, we consider the connected components $\{\pi^{-1}_{\omega,\, \rho}(\mathbf x)\}_\rho$ of the fiber $\pi^{-1}_\omega(\mathbf x) \subset \mathsf Z_\omega$ over $\mathbf x$ (note that $\pi^{-1}_\omega(\mathbf x)$ does not depend on $\vec{y}$). \smallskip

Next, we will be dealing with a \emph{family} of elements $\{\omega_\b \in  \mathbf\Omega^\bullet_{' \langle n]}\}_\b$, indexed by $\b$, a family of model polynomials $\{P_\b(u_\b, \vec{x}^\b) =_{\mathsf{def}} P_{\omega_\b}(u_\b, \vec{x}^\b) \}_\b$, and a family of appropriate cylinders $\{\Pi_\b =_{\mathsf{def}} \Pi_{\omega_\b}(\e_\b, r_\b)\}_\b$, equipped with the projections $\pi_\b: \Pi_\b \to \R^n$,  induced by the obvious projection $\pi: \R\times\R^n \to \R^n$. 


\begin{definition}\label{def8.5}  Let $Y$ be a closed smooth $n$-manifold. 
A \emph{smooth (oriented) proto-holographic atlas} on $Y$ is an open cover $\{U_\b\}_\b$\footnote{Its elements $U_\b$ are not necessarily connected sets.} of $Y$  together with the following structures: 
\begin{itemize}
\item for each index $\b$, an element $\omega_\b \in \mathbf\Omega^\bullet_{' \langle n]}$ is chosen,
\item for each $\b$, there exists a homeomorphism $h_\b$ from $U_\b$  onto a union of \emph{some} connected components of  the nonsingular real semi-algebraic set
$$\mathsf W_\b =_{\mathsf{def}} \{P_\b(u_\b, \vec{x}^\b) = 0\} \cap \Pi_\b \; \subset \mathsf Z_\b\; \subset \R\times \R^n,$$ 
\item for any pair of indices $\b, \b'$, there is a (orientation preserving) $C^\infty$-diffeomorphism $$h_{\b, \b'}: h_\b(U_\b \cap U_{\b'}) \to h_{\b'}(U_\b \cap U_{\b'})$$ of the hypersurfaces in $\R\times\R^n$ such that $$h_{\b'}|_{U_\b \cap U_{\b'}} =  h_{\b, \b'} \circ h_\b|_{U_\b \cap U_{\b'}},$$
\item  for each $\mathbf x \in \R^n$ and each connected component $\pi_{\b,\, \rho}^{-1}(\mathbf x)$ of the fiber $\pi_\b^{-1}(\mathbf x) \subset \mathsf Z_\b$ of the map $\pi_\b: \mathsf Z_\b \to \R^n$, the diffeomorphism $h_{\b, \b'}$ takes the finite set 
$$h_\b(U_\b \cap U_{\b'}) \cap \pi_{\b,\, \rho}^{-1}(\mathbf x)\;\,\; \text{to} \;\,\; h_{\b'}(U_\b \cap U_{\b'}) \cap \pi_{\b',\, \rho'}^{-1}(\mathbf x')$$ 
and preserves the $u_\b$- and $u_{\b'}$-induced orders of points in the two fibers. Here $\mathbf x' \in \R^n$ and $\pi_{\b',\, \rho'}^{-1}(\mathbf x')$ denotes some connected component of the fiber of $\pi_{\b'}: \mathsf Z_{\b'} \to \R^n$ over $\mathbf x'$.
\end{itemize}

We say that a proto-holographic atlas is \emph{holographic} if all the maps $h_\b: U_\b \to \mathsf W_\b$ are onto. 
\hfill $\diamondsuit$
\end{definition}


\smallskip

\noindent{\bf Remark 2.1.} 
The last bullet of Definition \ref{def8.5} imposes conditions on $h_{\b, \b'}$ which are a relaxed version of the following better looking, but more stringent  conditions: ``$h_{\b, \b'}$ takes each fiber of the obvious projection $$\pi_\b: h_\b(U_\b \cap U_{\b'}) \subset \R\times\R^n \to \R^n$$ to a fiber of $$\pi_{\b'}: h_{\b'}(U_\b \cap U_{\b'}) \subset \R\times\R^n \to \R^n$$ and preserves the natural orders of points in the two fibers".  However, the geometry of traversing flows requires the more relaxed constraint from Definition \ref{def8.5}.  \hfill $\diamondsuit$
\smallskip

\noindent{\bf Example 2.1.} 
\begin{itemize}
\item Here is the most trivial example of a smooth proto-holographic atlas. Pick a smooth structure on $Y$ by specifying an open cover $\{U_\b\}_\b$,  where each $U_\b$ is homeomorphic to an open $n$-ball $D^n$, together with some homeomorphisms $\phi_\b: U_\b \to D^n \subset \R^n$, and the linking diffeomorphisms $$\{\phi_{\b,\b'}: \phi_\b(U_\b \cap U_{\b'}) \to \phi_{\b'}(U_\b \cap U_{\b'})\}_{\b, \b'},$$ which satisfy the usual cocycle conditions. We associate the same element $\omega = (11)$ with each $U_\b$. 
Then let $$P_{\omega_\b}(u_\b, \vec{x}^\b) = (u - 1)(u - 2),$$
 and $$\Pi_\b = \{(u, \vec{y}):\;\|\vec{y}\| < r\}$$ for all $\b$. So $\mathsf W_\b$ is a union of two $n$-balls $$D^n_- = \{(u, \vec{y}): \; u = 1, \,\|\vec{y}\| < r\} \;\, \text{and}\;\, D^n_+ = \{(u, \vec{y}): \; u = 2, \,\|\vec{y}\| < r\},$$ while $\mathsf Z_\b$ is the cylinder $\{(u, \vec{y}):\; 1 \leq u \leq 2,\; \|\vec{y}\| < r\}$. We  identify $D^n$ with $D_-^n$ so that now $\phi_\b = h_\b$ takes $U_\b$ homeomorphically onto $D_-^n$. The verification of all the  properties, required for a proto-holographic atlas, is straightforward.
\smallskip

\item The next example is a variation on the same theme. This time, each element $U_\b$ of a proto-holographic atlas is homeomorphic to a disjoint union of \emph{two} open $n$-balls $D^n$. We associate the same element $\omega = (11)$ with each $U_\b$, but  now $$h_\b: U_\b \to  \mathsf W_\b = D^n_- \coprod D^n_+$$ is surjective---the atlas is holographic. Note that, if there exists a smooth holographic structure on $Y$ which is amenable to such a choice of charts $\{h_\b: U_\b \to D^n_- \coprod D^n_+\}$, then $Y$ is  a covering space with the fiber of cardinality $2$ over some manifold $Y_0$.
\hfill $\diamondsuit$
\end{itemize}

\begin{definition}\label{def8.6} 
\begin{itemize}
\item A proto-holographic atlas $\{V_\s\}_\s$ on $Y$ is called \emph{a refinement} of a proto-holographic atlas $\{U_\b\}_\b$ if each set $V_\s$ is contained in some set $U_\b$, where $\omega_\s \succeq \omega_\b$ in $\mathbf\Omega^\bullet_{'\langle n]}$,  and  the map $h_\s: V_\s \to \mathsf W_\s$ is a restriction of $h_\b: U_\b \to \mathsf W_\b$ to $V_\s$; moreover, the map $h_{\s, \s'}$ is obtained from the map $$h_{\b, \b'}: h_\b(U_\b \cap U_{\b'}) \to h_{\b'}(U_\b \cap U_{\b'})$$ by restricting it to the hypersurface $h_\s(V_\s \cap V_{\s'})$.\smallskip

\item Two smooth proto-holographic atlases on $Y$ are called \emph{equivalent} if they admit a common refinement.\smallskip

\item A \emph{proto-holographic structure} on $Y$ is the equivalence class of  smooth proto-holo-graphic atlases on $Y$. \hfill $\diamondsuit$
\end{itemize}
\end{definition}

Given a proto-holographic/holographic structure $\mathcal H$ on $Y$, any diffeomorphism $\Phi: Y \to Y$ induces the pull-back proto-holographic/holographic  structure $\Phi^\ast\mathcal H$ on $Y$. It is defined  by the formulas: $$\Phi^\ast(U_\b) =_{\mathsf{def}} \Phi^{-1}(U_\b),\quad \Phi^\ast h_\b: \Phi^{-1}(U_\b) \stackrel{\Phi}{\rightarrow}  U_\b  \stackrel{h_\b}{\rightarrow} \mathsf W_\b, \quad\Phi^\ast h_{\b, \b'} = h_{\b,\b'}.$$ 

Thus the group of smooth diffeomorphisms $\mathsf{Diff}(Y)$ acts on the set of all proto-holographic/ holographic structures $\mathcal H(Y)$ on $Y$. Consider $\overline{\mathcal H}(Y)$, the orbit set of this action.
We have little to say about the modular space $\overline{\mathcal H}(Y)$, an interesting object to study...  
\smallskip

\noindent{\bf Remark 2.2.}
The proto-holographic structures do not behave contravariantly under general smooth maps of manifolds. However, a weaker version of  the concept does. We call it appropriately ``\emph{weak proto-holographic structure}". In the second bullet of Definition \ref{def8.5}, we will relax the requirement that $h_\b: U_\b \to \mathsf W_\b$ is a homeomorphism onto a union of some components of the semi-algebraic set  $\mathsf W_\b$; instead, we assume that $h_\b: U_\b \to \mathsf W_\b$ is just a \emph{continuous map}, while keeping the rest of the bullets enforced and adding the cocycle condition 
$$h_{\b'',\b} \circ h_{\b',\b''} \circ h_{\b,\b'}\; |_{h_\b(U_\b \cap U_{\b'} \cap U_{\b''})} = Id$$ for each triple of indices $\b, \b', \b''$.   \hfill $\diamondsuit$
\smallskip

Given a map $\pi: A \to B$ of two sets and a subset $C \subset A$, we denote by $\pi^{!} (C)$ the set $\pi^{-1}(\pi(C)) \subset A$. Evidently $C \subset \pi^! (C)$. When $\pi: A \to B$ is a map of  topological spaces, we denote by $\pi^{!\, \bullet}(C)$ the union of connected components of $\pi^! (C)$ that have nonempty intersections with $C$.

\begin{definition}\label{def8.7} Let $Y$ be a closed smooth $n$-manifold. Consider the semi-algebraic set $$\mathsf Z_\b =_{\mathsf{def}} \{P_\b(u_\b, \vec{x}^\b) \leq 0\} \cap \Pi_\b \; \subset \R\times \R^n,$$
where the polynomial $P_\b$ is of the type (\ref{eq2.0}). We denote by $\pi_\b$ the obvious projection of $\mathsf Z_\b$ on $\R^n$. \smallskip

We say that a smooth holographic atlas $\{U_\b, h_\b\}_\b$ on $Y$ 
is (\emph{orientably}) \emph{fillable} if:
\begin{itemize}
\item for any pair of indices $\b, \b'$, the (orientation-preserving) $C^\infty$-diffeomorphism $$h_{\b, \b'}: h_\b(U_\b \cap U_{\b'}) \to h_{\b'}(U_\b \cap U_{\b'})$$ of the ($\nabla P_\b$-oriented) hypersurfaces in $\R\times\R^n$ extends to a (orientation-preserving) $C^\infty$-diffeomorphism $$H_{\b, \b'}: \mathsf Z_\b^{\b'} \to \mathsf Z_{\b'}^\b$$
of the domains
$$\mathsf Z_\b^{\b'} =_{\mathsf{def}}   \pi_\b^{!\, \bullet}\big(h_\b(U_\b \cap U_{\b'})\big)$$  and $$\mathsf Z_{\b'}^\b =_{\mathsf{def}}  \pi_{\b'}^{!\, \bullet}\big(h_{\b'}(U_\b \cap U_{\b'})\big),$$ 
so that  $H_{\b, \b'}$ maps every connected component of each fiber of the projection $$\pi_\b: \mathsf Z_\b^{\b'} \subset \R\times\R^n \stackrel{\pi}{\rightarrow} \R^n$$ to a connected component of a fiber of $$\pi_{\b'}: \mathsf Z_{\b'}^\b \subset \R\times\R^n \stackrel{\pi}{\rightarrow} \R^n$$ and preserves their orientations, \smallskip

\item  for any triple of indices $\b, \b', \b''$, 
$$H_{\b'',\,\b}\circ H_{\b',\,\b''} \circ H_{\b,\,\b'} \; |_{Z_\b\,  \cap \,  \pi^{!\bullet}_\b(U_\b \cap U_{\b'} \cap U_{\b''})} =  Id.$$
\end{itemize}

We say that a holographic structure $\mathcal H$ on $Y$ is  (\emph{orientably}) \emph{fillable}, if $\mathcal H$ is represented by a holographic atlas which is (orientably) fillable. \hfill $\diamondsuit$ 
\end{definition} 

\noindent {\bf Remark 2.3}
Note that the quotient $\frac{P_{\b'} \circ H_{\b,\b'}}{P_\b}$ of the two functions from Definition \ref{def8.7} must be positive in $\pi_\b^{! \, \bullet}(h_\b(U_\b \cap U_{\b'})).$ 
\hfill $\diamondsuit$
\smallskip

\noindent {\bf Remark 2.4}
Given two proto-holographic/holographic atlases on closed smooth manifolds $Y_1$ and $Y_2$ of equal dimensions, one can form a proto-holographic/holographic atlas on $Y_1 \coprod Y_2$ by considering the obvious  disjoint union of atlases. Similarly, given two holographic fillable atlases on $Y_1$ and $Y_2$, the obvious union of atlases is a holographic  fillable atlas on $Y_1 \coprod Y_2$.\smallskip


These observations \emph{could} lead to the notion of \emph{cobordism} between two \emph{proto}-holographic atlases/structures $\mathcal H_1$ and $\mathcal H_2$ on $Y_1$ and $Y_2$, respectively: it is tempting to define $\mathcal H_1$  to be cobordant to $\mathcal H_2$  if $\mathcal H_1\coprod -\mathcal H_2$ is fillable. Alas, such ``cobordism" is \emph{not} an equivalence relation!

In the future papers, we will introduce a proper notion of a cobordism relation between traversally generic vector fields on manifolds with boundary and will generalize this notion  to cobordisms between holographic structures.  
\hfill $\diamondsuit$

\begin{theorem}\label{th8.7}
\begin{itemize}
\item Any traversally generic vector field $v$ on a (oriented) compact manifold $X$ with boundary gives rise to a smooth holographic (orientably) fillable structure $\mathcal H(v)$ on the manifold $Y = \d_1X$. \smallskip

\item Conversely,  any smooth holographic (orientably) fillable structure $\mathcal H$ on $Y$ arises from a traversally  generic field $v$ on a (oriented) manifold $X$ whose boundary is diffeomorphic to $Y$.
\end{itemize}
\end{theorem}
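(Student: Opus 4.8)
The plan is to attack both implications through the semi-local normal form recalled above: by the working definition of a traversally generic field (\cite{K2}), every trajectory $\g$ of combinatorial type $\om$ admits a tube carrying coordinates $(u,\vec x,\vec y)$ that identify $X$ with $\mathsf Z_\om$, the boundary $\d_1 X$ with $\mathsf W_\om$, and $v$ with $\d_u$. For the first (forward) bullet I would read the atlas straight off these tubes. First I would cover the compact manifold $Y = \d_1 X$ by the boundary traces of finitely many tubes, obtaining charts $h_\b\colon U_\b \to \mathsf W_\b$ that are \emph{onto} (the boundary part of a tube is exactly $\mathsf W_\b$), so the atlas is holographic. The transition maps $h_{\b,\b'}$ are the restrictions to the boundary of the smooth coordinate changes between overlapping tubes; since every such change carries oriented $v$-trajectories to oriented $v$-trajectories, it sends fibers of $\pi_\b$ to fibers of $\pi_{\b'}$ and preserves the $u$-orders, which is precisely the last bullet of Definition \ref{def8.5}. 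Fillability (Definition \ref{def8.7}) is then automatic: the same coordinate changes are already defined on the \emph{full} bulk tubes and furnish the extensions $H_{\b,\b'}$, which map oriented fiber-components to oriented fiber-components and satisfy the cocycle identity because they are honest transition maps of an atlas on $X$; orientability is inherited when $X$ is oriented.

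For the converse I would reconstruct $X$ by gluing the bulk models along the filling data. Form $\coprod_\b \mathsf Z_\b$ (finitely many $\b$, since $Y$ is compact) and identify $z \in \mathsf Z_\b^{\b'}$ with $H_{\b,\b'}(z) \in \mathsf Z_{\b'}^\b$. The cocycle condition of Definition \ref{def8.7} makes this an equivalence relation, so $X =_{\mathsf{def}} \big(\coprod_\b \mathsf Z_\b\big)/\!\sim$ is well defined and the $\mathsf Z_\b$ serve as its charts; smoothness of the $H_{\b,\b'}$ gives $X$ a smooth structure, and the boundary pieces $\mathsf W_\b$ glue by $h_{\b,\b'}$ back into the prescribed atlas, so $\d_1 X \cong Y$ and the induced structure $\mathcal H(v)$ recovers the given $\mathcal H$.

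To produce the field I would exploit that $H_{\b,\b'}$ carries oriented connected fiber-components of $\pi_\b$ to those of $\pi_{\b'}$; hence the local foliations by $\d_u$-lines descend to a single oriented $1$-dimensional foliation $\mathcal F$ on $X$. I would then take $v$ to be any positively oriented smooth field tangent to $\mathcal F$, for instance $v = \sum_\b \varphi_\b\, \d_{u_\b}$ for a partition of unity $\{\varphi_\b\}$ subordinate to the cover, with patched Lyapunov function $F = \sum_\b \varphi_\b\, u_\b$ to certify that $v$ is traversing. Near any trajectory $v$ carries the model foliation and hence the tangency pattern $\om$ against $\d_1 X$; by the local classification of \cite{K2}, \cite{K3}, this combinatorial type is realized by the polynomial normal form after a reparametrization along $\mathcal F$ together with a coordinate change, so $v$ satisfies the working definition and is traversally generic.

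The hard part will be the converse, specifically showing that the quotient $X$ is a \emph{compact Hausdorff} manifold whose only boundary is $Y$. Each model $\mathsf Z_\b$ is open toward its lateral directions ($\|\vec x^\b\|\to \e_\b$, $\|\vec y^\b\|\to r_\b$), and these lateral ends must be absorbed into the interiors of neighboring charts rather than survive as spurious boundary or produce non-Hausdorff points. This is exactly the role of the saturated gluing domains $\mathsf Z_\b^{\b'} =_{\mathsf{def}} \pi_\b^{!\,\bullet}\big(h_\b(U_\b\cap U_{\b'})\big)$: being unions of \emph{entire} fiber-components that meet the overlap, they force $H_{\b,\b'}$ to identify whole trajectory segments, so the oriented foliation and the genuine boundary $\mathsf W$ are respected while the artificial lateral walls are glued away. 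I expect the main obstacle to be verifying that the finite saturated cover leaves no free faces and is separated, and that the reparametrizations implicit in the $H_{\b,\b'}$ can be reconciled with a single polynomial normal form for $v$; here the full strength of Definition \ref{def8.7} is used, and the Holography Theorem \ref{th1.1} guarantees that the bulk so obtained is the expected one up to trajectory-preserving equivalence.
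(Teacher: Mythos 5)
Your proposal follows, in both directions, essentially the same route as the paper's own proof: for the first bullet you restrict the $v$-adjusted cover of Lemma 3.4 of \cite{K2} to the boundary (so $U_\b = V_\b \cap \d X$, $h_\b = H_\b|_{U_\b}$, with fillability coming for free because the bulk transition maps $H_{\b'} \circ H_\b^{-1}$ already serve as the required extensions $H_{\b,\b'}$), and for the converse you glue $\coprod_\b \mathsf Z_\b$ along the $H_{\b,\b'}$, use the cocycle condition for transitivity, descend the fiberwise foliations to an oriented one-dimensional foliation $\mathcal F^u$, and take for $v$ a positively oriented field tangent to it. (The paper produces $v$ as the unit tangent field of an auxiliary Riemannian metric where you take a partition-of-unity average $\sum_\b \varphi_\b \d_{u_\b}$; both devices are equally valid.) Your insistence that Hausdorffness, compactness, and the absence of spurious boundary must be checked, and that the $\pi^{!\,\bullet}$-saturation of the gluing domains is what rules them out, is a legitimate sharpening of a point the paper dismisses as ``on the level of definitions''.

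There is, however, one step that fails as written: the claim that $F = \sum_\b \varphi_\b u_\b$ is a Lyapunov function certifying that $v$ is traversing. Differentiating along $v$ gives $dF(v) = \sum_\b \varphi_\b\, du_\b(v) + \sum_\b u_\b\, d\varphi_\b(v)$. The first sum is positive, but the second has no definite sign: using $\sum_\b d\varphi_\b \equiv 0$, it can be rewritten as $\sum_\b (u_\b - u_{\b_0})\, d\varphi_\b(v)$, and since the transition maps may reparametrize the $u$-coordinate along leaves arbitrarily (e.g., by large shifts), while the derivatives $d\varphi_\b(v)$ are uncontrolled, this term can dominate and make $dF(v) < 0$. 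Local Lyapunov functions simply do not patch under partitions of unity---this is exactly why ``traversing'' is a nontrivial global condition (think of a single closed leaf, which locally looks like $\d_u$ everywhere). The correct argument, implicit in the paper's terse assertion that $v$ is traversally generic ``by the very construction'', is this: because each $H_{\b,\b'}$ identifies whole connected components of $\pi_\b$-fibers with whole connected components of $\pi_{\b'}$-fibers---precisely what the $\pi^{!\,\bullet}$-saturation in Definition \ref{def8.7} guarantees---every leaf of $\mathcal F^u$ is assembled from finitely many closed segments glued to one another along entire segments, hence is itself a closed segment or a singleton; and by the characterization of traversing fields in \cite{K1}, a nonvanishing field on a compact manifold with boundary all of whose trajectories are closed segments or singletons does admit a Lyapunov function. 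With that repair your argument coincides with the paper's.
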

\begin{proof} Consider a traversally  generic field $v$ on $X$. By Lemma 3.4 from \cite{K2}, 
$X$ has a $v$-adjusted cover $\{V_\b\}_\b$ by connected sets $V_\b$ which admit special coordinates $(u_\b, \vec{x}^\b, \vec{y}^\b)$ as in (\ref{eq2.0}). With each set $V_\b$ we associate the combinatorial type $\omega_\b \in \mathbf\Omega^\bullet_{' \langle n]}$ of the core trajectory $\g_\b \subset V_\b$ (given by the equations $ \vec{x}^\b = 0, \vec{y}^\b = 0$). In these coordinates, $\d X \cap V_\b$ is given by a polynomial equation $P_\b(u_\b, \vec{x}^\b) = 0$ (as in (\ref{eq2.0})) and $V_\b$---by the inequality $P_\b(u_\b, \vec{x}^\b) \leq 0$. These two local descriptions are coupled with the $P_\b$-correlated inequalities $\|\vec{x}^\b\| < \e_\b, \|\vec{y}^\b\| < r_\b$, designed to partition real roots of $u_\b$-polynomials $\{P_\b(u_\b, \vec{x}^\b)\}_{\vec{x}^\b}$ into separate groups, the groups being labeled by the roots of  $P_\b(u_\b, \vec{0})$ (see Lemma 3.4 from \cite{K2}). 

Let $H_\b: V_\b \to \mathsf Z_\b \subset \R^{n+1}$ be the diffeomorphism, given by the local coordinates $(u_\b, \vec{x}^\b, \vec{y}^\b)$. We denote by $h_\b$ its restriction to the boundary portion $U_\b =_{\mathsf{def}} V_\b \cap \d X$. 

By fixing the coordinates $(\vec{x}^\b, \vec{y}^\b)$ and a \emph{marker} $u_\b^\star \in \R$ such that $P_\b(u_\b^\star, \vec{x}^\b) \leq 0$, we are getting the $H_\b$-images of the $v$-trajectories  $\g \subset V_\b$, residing in the set  $$\mathsf Z_\b =_{\mathsf{def}} \{P_\b(u_\b, \vec{x}^\b) \leq 0,\;  \|\vec{x}^\b \| \leq \e_\b,\; \|\vec{y}^\b \| \leq  r_\b\}.$$

We define a diffeomorphism $$H_{\b,\b'}: H_\b(V_\b \cap V_{\b'}) \to H_{\b'}(V_\b \cap V_{\b'})$$ by the formula $H_{\b'}\circ H_\b^{-1} |_{H_\b(V_\b \cap V_{\b'})}$. Due to the nature of the local coordinates $(u_\b, \vec{x}^\b, \vec{y}^\b)$, $H_{\b,\b'}$ takes each connected component of every fiber of the $u$-directed projection $$\pi_\b:  \mathsf Z_\b = H_b(V_\b) \to \R^n$$ to a connected component of a fiber of the  $u$-directed projection $$\pi_{\b'}: \mathsf Z_{\b'} = H_{b'}(V_{\b'}) \to \R^n.$$ It  preserves the $u$-induced orientation of the $\pi$-fibers. 
Note that $$\pi_\b^{!\, \bullet}(h_\b(U_\b \cap U_{\b'})) = H_\b(V_\b \cap V_{\b'})\;\, \text{and} \;\, \pi_{\b'}^{!\, \bullet}(h_{\b'}(U_\b \cap U_{\b'})) = H_{\b'}(V_\b \cap V_{\b'}).$$ 

Evidently, for any triple $\b, \b', \b''$, we obtain the identity $$H_{\b'',\b}\circ H_{\b',\b''} \circ H_{\b,\b'} \; |_{Z_\b \cap H_\b(V_\b \cap V_{\b'} \cap V_{\b''})} =  Id.$$ 

Consider $U_\b =_{\mathsf{def}} V_\b \cap \d X$,  $h_\b =_{\mathsf{def}} H_\b|_{U_\b}$, and $h_{\b,\b'} =_{\mathsf{def}} H_{\b,\b'}|_{h_\b(U_\b \cap U_{\b'})}$. Thus $\{U_\b, h_\b, h_{\b,\b'}\}$ form a holographic atlas on $\d X$, which is fillable. The validation of this claim is on the level of definitions.  
\smallskip

Now we turn to the validation of the second bullet in Theorem \ref{th8.7}. Consider a fillable holographic atlas $\{U_\b, h_\b, h_{\b,\b'}\}_\b$ on $Y$ (as in Definition \ref{def8.7}). In particular, we have the diffeomorphisms $$H_{\b, \b'}: \mathsf Z_\b^{\b'} =_{\mathsf{def}}  \pi_\b^{!\, \bullet}\big(h_\b(U_\b \cap U_{\b'})\big) \to \mathsf Z_{\b'}^\b =_{\mathsf{def}}  \pi_{\b'}^{!\, \bullet}\big(h_{\b'}(U_\b \cap U_{\b'})\big)$$ to play with. \smallskip

Consider the hypersurface $$\mathsf W_\b =_{\mathsf{def}} \{P_\b(u_\b, \vec{x}^\b) = 0\} \cap \Pi_\b \; \subset \R\times \R^n.$$

Let $X$ denote a quotient of the space $\coprod_\b \mathsf Z_\b$ by the following equivalence relation:  $z \in \mathsf Z_\b$ is equivalent to $z' \in \mathsf Z_{\b'}$ if $z  \in  \mathsf Z_\b^{\b'} \; \text{and}\; z' = H_{\b,\b'}(z) \in \mathsf Z_{\b'}^\b.$  The cocycle condition $$H_{\b'',\b}\circ H_{\b',\b''} \circ H_{\b,\b'} \; |_{Z_\b \cap \pi^!_\b(U_\b \cap U_{\b'} \cap U_{\b''})} =  Id$$ 
implies that if $z \sim z'$ and $z' \sim z''$, then $z'' \sim z$.\smallskip

We denote by $q: \coprod_\b \mathsf Z_\b \to X$ the quotient map that takes each point $z$ to its equivalence class $q(z)$. Evidently, for each index $\b$, $q:  \mathsf Z_\b \to q(\mathsf Z_\b)$ is a homeomorphism.

It is on the level of definitions to verify that $X$ is a smooth compact $(n+1)$-manifold with the smooth structure being defined by the atlas $$\{q(\mathsf Z_\b),\; q^{-1}: q(\mathsf Z_\b)  \to \R\times\R^n\}_\b$$ and with the boundary $\d X = \cup_\b\; q(\mathsf W_\b)$ being diffeomorphic to $Y$.

Moreover, $X$ admits a $1$-dimensional foliation $\mathcal F^u$ with oriented leaves. Indeed, each chart $q(\mathsf Z_\b)$ comes equipped with a $1$-dimensional foliation $\mathcal F_\b$, generated as the $q$-image of the connected components of fibers of the projection $\pi_\b: \mathsf Z_\b \to \R^n$. Since the gluing  diffeomorphisms $\{H_{\b,\b'}\}$ take components of the fibers of  $\pi: \mathsf Z_\b^{\b'} \to \R^n$ to components of the fibers of $\pi: \mathsf Z_{\b'}^\b \to \R^n$ and preserve their orientation, these local foliations $\mathcal F_\b$ produce a globally defined foliation $\mathcal F^u$ on $X$ with oriented leaves. We can equip $X$ with a Riemannian metric $g$. Let $v$ be the unit vector field tangent to the leaves of $\mathcal F^u$ and coherent with their orientation. By the very construction of $\mathcal F^u$ from the traversally  generic building blocks $\{\mathsf Z_\b\}_\b$, the field $v$ is traversally  generic. 

Thus, via this construction, we have built the desired pair $(X, v)$ from a given fillable holographic atlas. 
\hfill \end{proof}

\begin{corollary}\label{cor8.13} If a closed smooth manifold $Y$ admits a (orientably) fillable holographic structure, then $Y$ is a boundary of a (orientable) smooth manifold $X$. 
\hfill $\diamondsuit$
\end{corollary}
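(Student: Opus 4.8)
The plan is to read this off directly from the second bullet of Theorem \ref{th8.7}, which is precisely the existence result needed. Given a closed smooth $n$-manifold $Y$ equipped with a fillable holographic structure $\mathcal H$, choose a fillable holographic atlas $\{U_\b, h_\b, h_{\b,\b'}\}_\b$ representing $\mathcal H$, together with the extending diffeomorphisms $H_{\b,\b'}: \mathsf Z_\b^{\b'} \to \mathsf Z_{\b'}^\b$ guaranteed by Definition \ref{def8.7}. The construction in the proof of the second bullet then assembles the quotient $X = \big(\coprod_\b \mathsf Z_\b\big)/\!\sim$, where the equivalence relation is generated by the $H_{\b,\b'}$; the cocycle condition makes $\sim$ transitive, and $X$ is a smooth compact $(n+1)$-manifold carrying a traversally generic field $v$ with $\d X$ diffeomorphic to $Y$. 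The mere existence of this $X$ establishes that $Y$ bounds, proving the non-oriented assertion.

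For the orientable refinement, I would track orientations through the same construction. Each building block $\mathsf Z_\b$ sits in $\R\times\R^n \approx \R^{n+1}$, which the paper has fixed as oriented; this endows each chart $q(\mathsf Z_\b) \subset X$ with a local orientation via the homeomorphism $q: \mathsf Z_\b \to q(\mathsf Z_\b)$. In the orientably fillable case the gluing diffeomorphisms $H_{\b,\b'}$ are orientation-preserving, so on every overlap $q(\mathsf Z_\b) \cap q(\mathsf Z_{\b'})$ the two local orientations agree. Hence the local orientations patch into a single global orientation of $X$, and $Y = \d X$ bounds the orientable manifold $X$.

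I expect no genuine obstacle here: the corollary is a formal consequence of Theorem \ref{th8.7}, and the only point demanding a line of care is the coherence of the local orientations under the orientation-preserving cocycle $\{H_{\b,\b'}\}$, together with the standard fact that the induced boundary orientation on $\d X$ is then well defined. One could phrase the whole argument in a single sentence by simply quoting the theorem, but making the orientability statement explicit is cleanest if one points to the orientation-preserving clause of Definition \ref{def8.7} and notes that it is exactly what is required to glue the fixed orientation of $\R\times\R^n$ across charts.
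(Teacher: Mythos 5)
Your proposal is correct and coincides with the paper's own treatment: the corollary is stated as an immediate consequence of the second bullet of Theorem \ref{th8.7}, whose proof assembles $X$ as the quotient of $\coprod_\b \mathsf Z_\b$ by the gluing maps $H_{\b,\b'}$, exactly as you describe. Your explicit remark that the orientation-preserving clause of Definition \ref{def8.7} lets the fixed orientation of $\R\times\R^n$ patch to a global orientation of $X$ is the same observation implicit in the paper's parenthetical ``(orientably)/(oriented)'' bookkeeping, just spelled out.
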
 
\smallskip

Now, we would like to recall briefly the definition and construction of the Morse stratifications (\cite{Mo})
$$\d X = \d_1X(v) \supset \d_2X(v) \supset \dots \supset \d_{n+1}X(v)$$ and 
$$\d_1^+X(v) \supset \d_2^+X(v) \supset \dots \supset \d_{n+1}^+X(v),$$ induced by a boundary generic vector field $v$ on $X$. For a boundary generic $v$,  all the loci $\{\d_jX(v)\}_j$ are closed manifolds, and  the loci $\{\d_j^\pm X(v)\}_j$ are compact manifolds (possibly, with boundary).
 
Let $\d_jX =_{\mathsf{def}} \d_jX(v)$ denote the locus of points $x \in \d X$ such that the multiplicity of the $v$-trajectory $\g_x$ through $x$ at $x$ is greater than or equal to $j$. 

We may embed the compact manifold $X$ into an open manifold $\hat X$ of the same dimension, so that $v$ extends smoothly to a non-vanishing vector field $\hat v$ in $\hat X$. We treat the extension $(\hat X, \hat v)$ as ``a germ at $(X, v)$". 

With the help of embedding, the locus $\d_jX =_{\mathsf{def}} \d_jX(v)$ has a description in terms of an auxiliary function $z: \hat X \to \R$ that satisfies the following three properties:
\begin{eqnarray}\label{eq2.3}
\end{eqnarray}

\begin{itemize}
\item $0$ is a regular value of $z$,   
\item $z^{-1}(0) = \d X$, and 
\item $z^{-1}((-\infty, 0]) = X$. 
\end{itemize}

In terms of $z$, the locus $\d_jX =_{\mathsf{def}} \d_jX(v)$ is defined by the equations: 
$$\{z =0,\; \mathcal L_vz = 0,\; \dots, \;  \mathcal L_v^{(j-1)}z = 0\},$$
where $\mathcal L_v^{(k)}$ stands for the $k$-th iteration of the Lie derivative operator $\mathcal L_v$ in the direction of $v$ (see \cite{K2}). 
The pure stratum $\d_jX^\circ \subset \d_jX$ is defined by the additional constraint  $\mathcal L_v^{(j)}z \neq 0$. The locus $\d_jX$ is the union of two loci: $(1)$ $\d_j^+X$, defined by the constraint  $\mathcal L_v^{(j)}z \geq  0$, and $(2)$ $\d_j^-X$, defined by the constraint  $\mathcal L_v^{(j)}z \leq  0$. The two loci, $\d_j^+X$ and $\d_j^-X$, share a common boundary $\d_{j+1}X$.
\smallskip

Any proto-holographic structure $\mathcal H$ on $Y$ generates some  ``proto-objects", familiar from our study of traversally generic vector fields.   Among them is the stratification $\{\d_jY\}_{1 \leq j \leq n+1}$ of $Y$ by submanifolds of codimension $j$, an analogue of the Morse stratification \hfill\break $\{\d_jX(v)\}_{1 \leq j \leq n+1}$. When $\mathcal H$ is holographic, we can also construct the surrogate ``trajectory space" $\mathcal T(\mathcal H)$ and the partially-defined ``causality map" $C_{\mathcal H}: Y \to Y$. Constructions of such $\mathcal H$-induced objects are explicitly described in the proof of the following lemma. 

\begin{lemma}\label{lem8.10} Every smooth proto-holographic structure $\mathcal H$ on a closed $n$-manifold $Y$ gives rise to a stratification $\{\d_j^\pm Y =_{\mathsf{def}} \d_j^\pm Y(\mathcal H)\}_{1 \leq j \leq n+1}$ of $Y$ by submanifolds  $\d_j^+ Y,\, \d_j^- Y$ of codimension $j$ such that: 
\begin{itemize}
\item $\d_1^+Y \cup \d_1^-Y = Y$,
\item $\d(\d_j^+ Y) = \d(\d_j^- Y) = \d_{j +1}^+Y \cup \d_{j +1}^-Y$ for all $j$. 
\end{itemize}

Every smooth holographic structure $\mathcal H$ on  $Y$ gives rise to:
\begin{itemize}
\item  a causality  map $C_{\mathcal H}: \d_0^+Y \to  \d_0^-Y$ (possibly discontinuous) whose fixed-point set is the locus $\d_2^-Y \cup \dots \cup \d_{n+1}^-Y$,
\item  a  $C_{\mathcal H}$-trajectory space $\mathcal T(\mathcal H)$, the target of a finitely ramified  map $\Gamma_\mathcal H: Y \to \mathcal T(\mathcal H)$. Locally, $\mathcal T(\mathcal H)$ is modeled after the standard cellular complexes $\{\mathsf T_\omega\}_\om$ (as in Theorem 5.3 from \cite{K3}), where $\omega \in  \mathbf\Omega^\bullet_{' \langle n]}$.
\end{itemize}
\end{lemma}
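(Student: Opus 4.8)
The plan is to run, chart by chart, the very constructions that a traversally generic field performs in its special coordinates, and then to check that the transition data recorded in Definition \ref{def8.5} are exactly what is needed to glue the local pictures. In a chart $(U_\b, h_\b)$ I identify $U_\b$ with (a union of components of) $\mathsf W_\b = \{P_\b = 0\}\cap\Pi_\b$, use $v = \d_{u_\b}$ as the model field, and use $P_\b$ in the role of the defining function $z$ of the Morse paragraph, so that $\mathcal L_v^{(k)}P_\b = \d_{u_\b}^{\,k}P_\b$. This reduces every assertion to a local statement in the model $P_\omega$ together with a gluing statement across charts.

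For the first bullet I set, in each chart,
$$\d_j \mathsf W_\b := \{P_\b = 0,\ \d_{u_\b}P_\b = 0,\ \dots,\ \d_{u_\b}^{\,j-1}P_\b = 0\},$$
and let $\d_j^{\pm}\mathsf W_\b$ be the two closed sheets into which the two signs of $\d_{u_\b}^{\,j}P_\b$ cut $\d_j\mathsf W_\b$, the labeling fixed (via Remark 2.3 and the orientation) so that $\d_j^-$ is the locus of top-most, ``upper'' tangencies. That each $\d_j\mathsf W_\b$ is a codimension-$j$ submanifold, and that $0$ is a regular value of $\d_{u_\b}^{\,j}P_\b$ on $\d_j\mathsf W_\b$, is the miniversality of the family $P_\omega$ and is already contained in \cite{K2, K3}; I would simply quote it. Pulling these loci back by $h_\b$ gives the candidate strata $\d_j^\pm Y$. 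Then $\d_1^+Y\cup\d_1^-Y = Y$ is immediate, since $\d_1\mathsf W_\b = \mathsf W_\b$ and the two sign conditions on $\d_{u_\b}P_\b$ cover everything; and $\d(\d_j^\pm Y) = \d_{j+1}^+Y\cup\d_{j+1}^-Y$ is the local statement that the zero set of $\d_{u_\b}^{\,j}P_\b$ inside $\d_j\mathsf W_\b$ is precisely $\d_{j+1}\mathsf W_\b$, which regularity supplies.

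The real content is that these chartwise loci are consistent, i.e.\ that $h_{\b,\b'}$ carries $\d_j\mathsf W_\b$ to $\d_j\mathsf W_{\b'}$, and this is the step I expect to be the main obstacle. The difficulty is that $h_{\b,\b'}$ is required only to respect the partition of $\mathsf W$ into the finite traces $\mathsf W\cap\pi^{-1}_{\b,\rho}(\mathbf x)$ of the fibre-components and their $u$-order (the last bullet of Definition \ref{def8.5}); it need not preserve $u$, $P_\b$, or the multiplicities in any manifest way, so the derivative definition above is not obviously transferable. To get around this I would characterize the strata using only the preserved trace data: $\d_j\mathsf W$ is the set of points $w$ to which at least $j$ trace-points of $\pi$ on $\mathsf W$ can simultaneously converge as the fibre is varied near the fibre through $w$ (the $j$-th folding/merging stratum of the order-preserving family of finite fibres). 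By miniversality this merging count equals the root multiplicity, so it agrees with the derivative definition; and because $h_{\b,\b'}$ is a diffeomorphism that preserves the finite fibres, their parametrization up to reparametrization, and the $u$-order, it preserves this merging structure, hence each $\d_j$ as a set — and therefore, being a diffeomorphism, as a smooth stratum. The $\pm$-split survives because ``cluster of extra roots sits below $w$'' versus ``above $w$'' is read off from the preserved $u$-order (with the global sign pinned down by orientation and Remark 2.3).

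For the second bullet I pass to the holographic case, where $h_\b$ is onto $\mathsf W_\b$, so that every fibre-component of $\pi_\b:\mathsf Z_\b\to\R^n$ meets the reconstructed boundary. The causality map is then the fibrewise successor: for $w\in\d_0^+Y$ (the entrance-or-tangent locus) let $C_{\mathcal H}(w)$ be the next point, in the $u$-order, of $\mathsf W$ on the fibre-component of $w$, with $C_{\mathcal H}(w)=w$ when none exists; landing in the exit-or-tangent locus $\d_0^-Y$. Well-definedness across charts is exactly the order-and-fibre condition of Definition \ref{def8.5}, and $C_{\mathcal H}$ is continuous off the tangency strata, as in the smooth case. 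Its fixed points are the points that are top-most on their fibre-component, i.e.\ the tangencies with no causal successor; a model computation identifies these with the upper tangency strata, namely $\d_2^-Y\cup\dots\cup\d_{n+1}^-Y$. Finally, identifying two points of $Y$ whenever they lie on a common fibre-component yields $\mathcal T(\mathcal H)$, with $\Gamma_{\mathcal H}:Y\to\mathcal T(\mathcal H)$ the quotient map; it is finitely ramified because each fibre-component meets $\mathsf W$ in finitely many points. Since over a chart of type $\omega$ the quotient $\mathsf Z_\omega/\pi$ is the model complex $\mathsf T_\omega$ of \cite{K3}, Theorem 5.3, and the surjection $\mathsf W_\omega \to \mathsf T_\omega$ exhibits $\mathsf T_\omega$ as a quotient of the boundary, $\mathcal T(\mathcal H)$ is locally modeled on the $\mathsf T_\omega$; the remaining patching of these local models along the descended $h_{\b,\b'}$ is again guaranteed by the fibre-compatibility of the transition maps.
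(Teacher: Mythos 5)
Your overall architecture is the same as the paper's: work in the chartwise models $\mathsf W_\b \subset \mathsf Z_\b$ with the field $\d_{u_\b}$, observe that the transition maps $h_{\b,\b'}$ preserve only the per-component fiber traces together with their $u$-orders, and therefore recharacterize the strata $\d_j^\pm$, the causality map, and the trajectory space purely in terms of that preserved data; your second half (successor map, its fixed points $\d_2^-Y\cup\dots\cup\d_{n+1}^-Y$, the orbit quotient, the local models $\mathsf T_\om$ from Theorem 5.3 of \cite{K3}) is essentially the paper's proof. However, the two intrinsic invariants you introduce at the step you yourself flag as the main obstacle both fail as stated. First, ``merging count equals root multiplicity'': the number of trace points that can converge to $w$ equals $j(w)$ only if you count \emph{all} roots in the ambient fiber of $\pi:\mathsf W_\b \to \R^n$, across different connected components of the $\mathsf Z_\b$-fiber; but that across-component count is precisely what the transitions need not preserve, by the relaxation emphasized in Remark 2.1 (distinct components of one fiber may be sent to components of fibers over \emph{different} base points). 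The count that \emph{is} preserved---the cardinality of a single component trace---does not equal $j(w)$: near a root of odd multiplicity $j$, one component of $\{P_\b \le 0\}$ carries at most $1+(j-1)/2$ boundary points, so multiplicity $3$ gives at most $2$, indistinguishable by cardinality from multiplicity $2$. Second, your polarity rule ``cluster of extra roots below versus above $w$'' cannot separate $\d_j^+$ from $\d_j^-$ at even multiplicity: an atom (isolated point of $\{P_\b\le 0\}$, polarity $-$) and an even root interior to a string (polarity $+$) both merge symmetrically, so no below/above dichotomy detects the sign.

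The paper repairs exactly these two points, using only data protected by the fourth bullet of Definition \ref{def8.5}. Multiplicity is recovered \emph{recursively} from the combinatorial types of nearby localized component traces---type $(12\dots 21)$ with $(j-2)/2$ twos for $j$ even, type $(12\dots 2)$ with $(j-1)/2$ twos for $j$ odd---i.e., one counts trace points weighted by their inductively known multiplicities, with the parity read off from the local topology (atom-like versus endpoint-like behavior). Polarity is then assigned by the atom/string rule: a point gets $+$ or $-$ according to its position within its own fiber component (isolated point, interior point, lower end, or upper end of the interval), a four-way distinction where you offered a binary one. Both corrected invariants are functions of the per-component ordered trace data, so they transport under $h_{\b,\b'}$, and with them substituted for yours your argument closes up and becomes the paper's proof. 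As written, though, the chart-independence of the loci $\d_jY$ and of their $\pm$ refinement---the actual content of the first half of Lemma \ref{lem8.10}---is not established.
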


\begin{proof} Consider a proto-holographic atlas $\mathcal H = \{U_\b, h_\b, h_{\b,\b'}\}$ on $Y$. Recall that any point $z_\b^\star = (u_\b^\star, \vec{x}^\b_\star,  \vec{y}^\b_\star) \in \mathsf W_\b \subset  \d\mathsf Z_\b$ has a multiplicity $j(z_\b^\star)$, defined as the multiplicity of the appropriate root $u_\b^\star$ of the $u_\b$-polynomial $P_\b(u_\b, \vec{x}^\b)$. 

In fact, $j(z_\b^\star)$ can be detected just in terms of \emph{the localized maximal cardinality of fibers} of the map $\pi: \mathsf W_\b \to \R^n$, the fibers which are contained in a corresponding connected component of a fiber of $\pi: \mathsf Z_\b \to \R^n$. Here we localize the fibers of $\pi: \mathsf W_\b \to \R^n$ to the vicinity of $z_\b^\star$ in the hypersurface $\mathsf W_\b$.  Indeed, if $j(z_\b^\star)$ is an even number, then any small neighborhood of $z_\b^\star$ contains $\d_{u_\b}$-trajectories in $\mathsf Z_\b$ of the combinatorial type $(122\dots 21)$, where the number of $2$'s is $(j(z_\b^\star) - 2)/2$; if $j(z_\b^\star)$ is an odd number, then any small neighborhood of $z_\b^\star$ contains trajectories in $\mathsf Z_\b$ of the combinatorial type $(122\dots 2)$, where the number of $2$'s is $(j(z_\b^\star) -1)/2$ (see the arguments in the beginning of the proof of Theorem 3.1 from \cite{K4}). Note that the odd and even multiplicities are distinguished by their local topology.

Thus, for any $y \in U_\b \subset Y$, the point $z_\b =_{\mathsf{def}} h_\b(y)$ acquires multiplicity $j_\b(z_\b)$.  By the property of $h_{\b,\b'}$, described in the fourth bullet of Definition \ref{def8.5}, the points in the fibers of $\pi_\b:  h_\b(U_\b \cap U_{\b'}) \to \R^n$, which belong to the same connected component of the corresponding fiber $\pi_\b: \mathsf Z_\b \to \R^n$, are mapped to the points in the fibers of $\pi_{\b'}: h_{\b'}(U_\b \cap U_{\b'}) \to \R^n$, which also belong  to a connected component of the corresponding fiber $\pi_{\b'}: \mathsf Z_{\b'} \to \R^n$. Therefore, employing this property of diffeomorphism $h_{\b,\b'}$ and interpreting the multiplicities in terms of  $\pi$-fibers' cardinalities, we get $j_{\b'}(h_{\b'}(y)) = j_\b(h_\b(y))$ when $y \in U_\b \cap U_{\b'}$.
\smallskip

Consider the stratification  $\{\d_j^\pm  \mathsf W_\b =_{\mathsf{def}} \d_{j}^\pm \mathsf Z_\b(\d_\b)\}_j$ of $\mathsf W_\b$, generated by the constant vector field $\d_\b =_{\mathsf{def}} \frac{\d}{\d u_\b}$. 

We have seen that a  proto-holographic atlas $\mathcal H$ helps to define a stratification $\{\d_jY =_{\mathsf{def}} \d_jY(\mathcal H)\}_{1 \leq j \leq n+1}$ of $Y$ by closed submanifolds: just put $$\d_jY \cap U_\b =_{\mathsf{def}} h_\b^{-1}\big(\d_{j}\mathsf Z_\b(\d_\b) \cap h_\b(U_\b)\big).$$ 

The stratification $\{\d_jY(\mathcal H)\}_{1 \leq j \leq n+1}$ can be refined to a stratification $$\{\d_j^\pm Y=_{\mathsf{def}} \d_jY^\pm(\mathcal H)\}_{1 \leq j \leq n}$$ by compact submanifolds such that $\d_jY = \d_j^+Y \cup \d_j^-Y$ and $\d_{j +1}Y = \d( \d_j^+Y) = \d(\d_j^-Y)$ for all $j$. 

Let us describe its construction. By Theorem \ref{th8.8}, the stratification  $\{\d_j^\pm  \mathsf W_\b =_{\mathsf{def}} \d_{j}^\pm \mathsf Z_\b\}_j$ of $\mathsf W_\b$, generated by the field $\d_\b$, is characterized in terms of the polynomial $P_\b$  as follows: $(u_\b,\vec{x}^\b, \vec{y}^\b) \in \d_{j}^+ \mathsf Z_\b$ if  $\frac{\d^k}{\d u^k}P_\b(u_\b, \vec{x}^\b) = 0$ for all $k < j$, and $\pm\frac{\d^{j}}{\d u^{j}}P_\b(u_\b, \vec{x}^\b) < 0$.  

In fact, the sign of  the derivative $\frac{\d^{j}}{\d u^{j}}P_\b(u_\b, \vec{x}^\b)$ at a root $u_\b$ of multiplicity $j$ can be determined by the simple combinatorial rule: consider the real divisor $D_{P_\b}(\vec{x}^\b)$ of the $u_\b$-polynomial $P_\b(\sim, \vec{x}^\b)$ and decompose its support $\sup(D_{P_\b}(\vec{x}^\b))$ into ``atoms" and ``strings" as in Figure 3. Then any root of even multiplicity, which is an atom, is assigned the polarity ``$-$", any root of even multiplicity which is inside of a string is assigned the polarity ``$+$", any root of odd multiplicity, forming the lower end of a string, is is assigned the polarity ``$+$", while any root of odd multiplicity, forming the upper end of a string, is is assigned the polarity ``$-$"\footnote{these signs are opposite to the signs of the lowest order non-trivial derivatives  $\frac{\d^{j+1}}{\d u^{j+1}}P_\b(u_\b, \vec{x}^\b)$.}. 

\begin{figure}[ht]\label{fig3}
\centerline{\includegraphics[height=1in,width=3.5in]{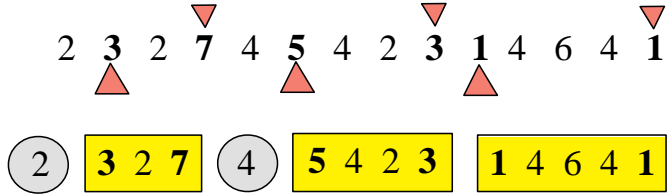}}
\bigskip
\caption{\small{Decomposing the pattern $\om =(23274542314641)$ into atoms (circled) and strings (boxed). The atoms are the isolated points of the locus $\{P_\om \leq 0\}$, and the strings represent the closed intervals of the same locus.}} 
\end{figure}

Therefore, the polarity $\pm$ of a point  $z \in \d_j\mathsf W_\b$ is determined by the ordered sequence of intersection points of the $\d_\b$-trajectory $\g_z \subset \mathsf Z_\b$ trough $z = (u_\b,\vec{x}^\b, \vec{y}^\b)$\footnote{These trajectories $\{\g_z\}$ produce the strings and atoms in $\sup(D_{P_\b}( \vec{x}^\b))$.} with the hypersurface $\mathsf W_\b$, each intersection point $z$ being considered with its multiplicity $j_\b(z)$. As the combinatorial rule above implies, the polarities attached to each intersection $z \in \g_z \cap \mathsf W_\b$ depend only on the multiplicities and the order of points from $\g_z \cap \mathsf W_\b$ along $\g_z$. Again, by the fourth bullet of Definition \ref{def8.5}, this kind of data are preserved under the smooth change of coordinates $h_{\b,\b'}: h_\b(U_\b \cap U_{\b'}) \to h_\b(U_\b \cap U_{\b'})$.  Therefore the local formulas $\{\d_j^\pm Y \cap U_\b =_{\mathsf{def}}  h_\b^{-1}(\d_{j}^\pm\mathsf Z_\b \cap h_\b(U_\b))\}_\b$ produce the desired globally-defined stratifications.
\smallskip

Given holographic atlas $\mathcal H$, let us turn to the construction of the causality map $C_{\mathcal H}: \d_1^+Y \to \d_1^-Y$. Take a point $y \in U_\b \cap \d_1^+Y$  to the point $h_\b(y) \in h_\b(U_\b) = \mathsf W_\b$. Then, in the fiber $\pi_\b^{-1}(h_\b(y))$ of  the map $\pi_\b: h_\b(U_\b) \subset \R\times\R^n \to \R^n$, which contains $h_\b(y)$, take $h_\b(y)$ to the point $z$ with the minimal first coordinate $u_\b(z) > u_b(h_\b(y))$ and such that $P_\b <  0$ in the open interval $(h_\b(y), z) \subset \R\times\R^n$; if no such $z$ exists, put $z = h_\b(y)$. Let $C_{\mathcal H}(y) =_{\mathsf{def}}  h_\b^{-1}(z)$. Since $\mathcal H$ is a holographic structure, the map $C_{\mathcal H}$ is well-defined for all points in $\d_1^+Y \cap U_\b$. 

Note that the model causality map $C_{\d_\b}: \mathsf W_\b^+ \to \mathsf W_\b^-$ has the locus $$\d_2^-\mathsf W_\b \cup \dots \cup \d_{n+1}^-\mathsf W_\b =_{\mathsf{def}}  \d_2^-\mathsf Z_\b \cup \dots \cup \d_{n+1}^-\mathsf Z_\b$$ for its fixed point set.

This local construction of $C_{\mathcal H}(y)$ does not depend on the choice of a chart $U_\b$ which contains $y$. Indeed, let $y \in U_\b \cap U_{\b'}$, $z_\b = h_\b(y)$, and $F_\b(y) = \pi_\b^{-1}\big(\pi_\b(z_\b)\big) \subset \mathsf Z_\b$. Similar notations are introduced for the map $h_{\b'}$. Then, according to the fourth bullet in Definition  \ref{def8.5}, the connecting diffeomorphism $h_{\b,\b'}$ maps each portion $F^\bullet_\b(y)$ of the fiber $F_\b(y)\cap \mathsf W_\b$ that belongs to the same connected component of the fiber $F_\b(y) \subset \mathsf Z_\b$ onto a corresponding portion $F^{'\bullet}_{\b'}(y)$ of the fiber $F'_{\b'}(y) \cap \mathsf W_{\b'}$ that belongs to the same connected component of the fiber $F'_{\b'}(y) \subset \mathsf Z_{\b'}$. Moreover, $h_{\b,\b'}$ respects the $u_\b$-induced order of points in $F^\bullet_\b(y)$ and the $u_{\b'}$-induced order of points in $F^{'\bullet}_{\b'}(y)$. The two locally-defined causality  maps, $$C_{\mathcal H, \b}: Y^+ \cap U_\b \to Y^- \cap U_\b \;\; \text{and} \;\; C_{\mathcal H, \b'}: Y^+ \cap U_{\b'} \to Y^- \cap U_{\b'},$$ operate within the sets $h_\b^{-1}(F^\bullet_\b(y))$ and $h_{\b'}^{-1}(F^{'\bullet}_{\b'}(y))$, respectively. Thus, by employing same combinatorial rules (which depend only on the intrinsically-defined polarities of the corresponding points), by the third bullet in Definition  \ref{def8.5}, we get that $C_{\mathcal H, \b}(y) = C_{\mathcal H, \b'}(y)$.
\smallskip
 
With the given  holographic atlas $\mathcal H$ on $Y$ in place, we declare two points $y_1, y_2 \in Y$ to be equivalent if there exists $y_0 \in Y$ such that both $y_1$ and $y_2$ can be obtained as images of $y_0$ under iterations of the causality  map $C_{\mathcal H}$. We denote by $\mathcal T(\mathcal H)$ the quotient space of $Y$ by this equivalence relation and call it \emph{the trajectory space} of  $\mathcal H$. The obvious finitely-ramified map  $\Gamma_{\mathcal H}: Y \to \mathcal T(\mathcal H)$ helps to define the quotient topology in the trajectory space $\mathcal T(\mathcal H)$. 

Note that, for each $y \in U_\b$, the vicinity of the point $\Gamma_{\mathcal H}(y)$ in $\mathcal T(\mathcal H)$,  is modeled after the standard cell complex $\mathsf T_\omega$ as in Theorem 5.3 from \cite{K3}, where $\omega \in (\omega_\b)_{\preceq} \subset \mathbf\Omega^\bullet_{ ' \langle n]}$. Indeed, the obvious projections $\mathsf W_\b \subset \mathsf Z_\b \to \mathsf T_{\omega_\b}$, with the help of $h_\b^{-1}$, provide for the local models of $\Gamma_{\mathcal H}$.
\hfill 
\end{proof}

\begin{corollary}\label{cor8.14} For any traversally  generic vector field $v$ on $X$, the $\mathbf\Omega^\bullet_ {' \langle n]}$-stratified topological type of the trajectory space $\mathcal T(v)$ depends only on the fillable holographic structure $\mathcal H(v)$ that $v$ generates on the boundary $\d_1X$.  
\end{corollary}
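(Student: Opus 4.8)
The plan is to reduce the statement to a single canonical identification. By Lemma \ref{lem8.10}, the trajectory space $\mathcal T(\mathcal H)$, together with its $\mathbf\Omega^\bullet_{'\langle n]}$-stratification $\{\d_j^\pm Y\}$ and its local models $\{\mathsf T_\om\}$, is manufactured \emph{entirely} from the holographic structure $\mathcal H$ — no bulk $X$ and no field $v$ enter the construction. Hence it suffices to produce, for each traversally generic $v$ on $X$, a $\mathbf\Omega^\bullet_{'\langle n]}$-stratified homeomorphism
$$\Psi:\; \mathcal T(\mathcal H(v)) \;\longrightarrow\; \mathcal T(v)$$
that is natural in $v$. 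Once this is in hand, the stratified type of $\mathcal T(v)$ is read off from $\mathcal T(\mathcal H(v))$, which is a function of $\mathcal H(v)$ alone; in particular, two fields inducing the same holographic structure on $Y$ will have stratified-homeomorphic trajectory spaces.

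First I would build $\Psi$ topologically. Restricting the quotient map $\Gamma: X \to \mathcal T(v)$ to $Y = \d_1 X$, I note that, since $v$ is traversing, its Lyapunov function is strictly monotone along trajectories, so by compactness of $X$ every trajectory terminates on $\d X$; thus $\Gamma|_Y: Y \to \mathcal T(v)$ is surjective. Two boundary points lie in a common fiber of $\Gamma|_Y$ exactly when they sit on one $v$-trajectory, i.e.\ when they are joined by iterates of the causality map $C_v$. By the construction of $\mathcal H(v)$ in Theorem \ref{th8.7}, the model causality map $C_{\mathcal H(v)}$ of Lemma \ref{lem8.10} reproduces $C_v$, so this is precisely the equivalence relation defining $\mathcal T(\mathcal H(v))$. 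Therefore $\Gamma|_Y$ factors as $\Psi \circ \Gamma_{\mathcal H(v)}$ for a unique bijection $\Psi$. As $Y$ is compact and $\mathcal T(v)$ is a $CW$-complex, hence Hausdorff, the continuous surjection $\Gamma|_Y$ is closed and so a quotient map; the quotient topology it induces coincides with the one defining $\mathcal T(\mathcal H(v))$, which upgrades $\Psi$ to a homeomorphism.

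It remains to check that $\Psi$ respects the stratifications, and this is where the real content lies. Near a point $[\g] \in \mathcal T(v)$ represented by a trajectory of combinatorial type $\om$, Theorem 5.3 of \cite{K3} models a neighborhood on the standard complex $\mathsf T_\om$, while near $\Psi^{-1}([\g])$ Lemma \ref{lem8.10} produces a model $\mathsf T_{\om'}$, where $\om'$ is the type detected from the boundary via the localized fiber cardinalities of $\pi_\b: \mathsf W_\b \to \R^n$. The hard part is to confirm that $\om' = \om$ and that the two copies of $\mathsf T_\om$ are identified by $\Psi$ through the same $(P_\om, \Pi_\om)$-data. This I would establish by observing that the boundary charts $h_\b = H_\b|_{U_\b}$ of $\mathcal H(v)$ are restrictions of the very bulk charts $H_\b$ that realize $\mathsf T_{\om_\b}$ as the local model of $\mathcal T(v)$ in Theorem \ref{th8.7}; the projection $\mathsf W_\b \subset \mathsf Z_\b \to \mathsf T_{\om_\b}$ used in Lemma \ref{lem8.10} is the restriction of the $\d_{u_\b}$-trajectory-collapsing projection $\mathsf Z_\b \to \mathsf T_{\om_\b}$, so the chart-wise local models of $\Gamma_{\mathcal H(v)}$ and of $\Gamma$ coincide. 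Consequently $\Psi$ carries $\Gamma_{\mathcal H(v)}(\d_j^\pm Y)$ onto the corresponding stratum of $\mathcal T(v)$, preserving $\om$-labels, and is a stratified homeomorphism. I expect the main obstacle to be exactly this last matching — one must verify that the purely boundary-theoretic reading of the combinatorial type in Lemma \ref{lem8.10} agrees with the genuine type of the interior trajectory — which is guaranteed by the fiber-cardinality characterization of multiplicities together with the fact that $h_\b$ and $H_\b$ share the same polynomial model $P_\b$.
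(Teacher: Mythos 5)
Your proposal is correct and follows essentially the same route as the paper: both arguments hinge on Lemma \ref{lem8.10}, which recovers the causality map intrinsically from $\mathcal H(v)$ (with $C_{\mathcal H(v)} = C_v$), and on the fact that the trajectory space is the quotient of $Y$ by the equivalence relation generated by iterates of this map. The paper simply asserts the final step (``the spaces $\mathcal T(v_i)$ depend only on $C_{\mathcal H(v_i)} = C_{v_i}$''), whereas you spell it out --- the factorization $\Gamma|_Y = \Psi \circ \Gamma_{\mathcal H(v)}$, the compactness/Hausdorff argument, and the matching of local models $\mathsf T_\om$ --- which is a legitimate filling-in of detail, not a different proof.
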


\begin{proof} By Theorem \ref{th8.7}, the fields $v_i$ ($i = 1, 2$), give rise to fillable holographic structures $\mathcal H(v_i)$. By Lemma \ref{lem8.10}, $\mathcal H(v_i)$ canonically produces the causality map $C_{\mathcal H(v_i)}$ (which happen to coincide with the causality  map $C_{v_i}$). By the hypotheses, $\mathcal H(v_1) = \mathcal H(v_2)$, which implies the equality $C_{\mathcal H(v_1)}  = C_{\mathcal H(v_2)}$. Since the spaces $\mathcal T(v_i)$ depend only on $C_{\mathcal H(v_i)} = C_{v_i}$, we conclude that  $\mathcal T(v_1)$ and $\mathcal T(v_2)$ are homeomorphic as $\mathbf\Omega^\bullet_{' \langle n]}$-stratified spaces.
\hfill \end{proof}
\smallskip

\noindent{\bf Remark 2.5.} 
In principle, in accordance with Theorem \ref{th8.7}, a given fillable holographic  structure $\mathcal H$ on $Y$, may have many realizations $\{\mathcal H(v)\}$ by traversally  generic fields $v$ on a variety of $X$'s whose boundary is $Y$. However, Holography Theorem \ref{th1.1} claims that the topological (often smooth) type of $X$, together with the $v$-induced oriented foliation $\mathcal F(v)$, is determined by $\mathcal H$ via the causality map $C_\mathcal H = C_v$. \hfill $\diamondsuit$

\begin{definition}\label{def8.8} Let $\mathcal H$ be a smooth holographic structure on $Y$ and $\mathcal T(\mathcal H)$ its trajectory space. 

A continuous function $h: \mathcal T(\mathcal H) \to \R$ is called \emph{holographically smooth}, if the composite function $Y \stackrel{\Gamma_{\mathcal H}}{\rightarrow} \mathcal T(\mathcal H) \stackrel{h}{\rightarrow} \R$ is smooth on $Y$. \smallskip

We denote by $C^\infty(\mathcal T(\mathcal H))$ the algebra of holographically smooth functions on $\mathcal T(\mathcal H)$. Via the induced monomorphism\footnote{Indeed, the map $Y \to \mathcal T(\mathcal H)$ is onto.} $\Gamma_{\mathcal H}^\ast$, the algebra $C^\infty(\mathcal T(\mathcal H))$ maps onto a subalgebra $C^\infty_\mathcal H(Y)$ of $C^\infty(Y)$. \hfill $\diamondsuit$
\end{definition}


\begin{corollary}\label{cor8.15} Let  $\mathcal H$ be a holographic structure on a closed smooth manifold $Y$. 

Then $\mathcal H$ determines the algebra $C^\infty(\mathcal T(\mathcal H))$ of holographically smooth functions on the trajectory space $\mathcal T(\mathcal H)$ as a subalgebra $C^\infty_\mathcal H(Y)$ of $C^\infty(Y)$. \smallskip

Moreover, any diffeomorphism $\Phi: Y \to Y$ induces an algebra isomorphism $$\Phi^\ast: C^\infty_\mathcal H(Y) \to C^\infty_{\Phi^\ast\mathcal H}(Y),$$ where $\Phi^\ast\mathcal H$ denotes the $\Phi$-induced holographic structure.
\end{corollary}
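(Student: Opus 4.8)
\emph{Proof proposal.} The plan is to reduce both assertions to the fact, established in Lemma \ref{lem8.10}, that a holographic structure $\mathcal H$ canonically produces the surjective quotient map $\Gamma_{\mathcal H}: Y \to \mathcal T(\mathcal H)$, together with the elementary observation that $C^\infty_{\mathcal H}(Y)$ admits an intrinsic description as the subalgebra of $C^\infty(Y)$ consisting of functions that are constant along the fibers of $\Gamma_{\mathcal H}$.

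First I would record the characterization
$$C^\infty_{\mathcal H}(Y) = \{\, f \in C^\infty(Y) : f \text{ is constant on every fiber of } \Gamma_{\mathcal H}\,\}.$$
One inclusion is immediate: every $f = \Gamma_{\mathcal H}^\ast h$ factors through $\Gamma_{\mathcal H}$ and so is fiberwise constant. Conversely, if a smooth $f$ is constant on the fibers of $\Gamma_{\mathcal H}$, it descends to a function $h$ on $\mathcal T(\mathcal H)$; since $\mathcal T(\mathcal H)$ carries the quotient topology and $h\circ\Gamma_{\mathcal H}=f$ is smooth (in particular continuous), $h$ is continuous, hence holographically smooth by Definition \ref{def8.8}, giving $f=\Gamma_{\mathcal H}^\ast h \in C^\infty_{\mathcal H}(Y)$. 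Because $\Gamma_{\mathcal H}$ is onto, $\Gamma_{\mathcal H}^\ast$ is injective, so it is an algebra isomorphism of $C^\infty(\mathcal T(\mathcal H))$ onto $C^\infty_{\mathcal H}(Y)$. As $\Gamma_{\mathcal H}$, and hence the partition of $Y$ into its fibers, is determined by $\mathcal H$ alone (Lemma \ref{lem8.10}), the subalgebra $C^\infty_{\mathcal H}(Y)\subset C^\infty(Y)$ is intrinsic to $\mathcal H$; this is the first assertion.

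For the second assertion the key step is to show that $\Phi$ conjugates the two holographically induced quotient maps. Unwinding the definition of the pull-back structure $\Phi^\ast\mathcal H$ (charts $\Phi^{-1}(U_\b)$, chart maps $h_\b\circ\Phi$, unchanged transition maps $h_{\b,\b'}$) and feeding it into the local recipe for the causality map in the proof of Lemma \ref{lem8.10}, I expect to obtain the conjugation identity $C_{\Phi^\ast\mathcal H}=\Phi^{-1}\circ C_{\mathcal H}\circ\Phi$: a point $y\in\Phi^{-1}(U_\b)$ is sent by $h_\b\circ\Phi$ to the very point of $\mathsf W_\b$ that $\Phi(y)$ is sent to by $h_\b$, and the subsequent selection of the causality image inside $\mathsf W_\b$ is governed by the same polynomial $P_\b$ and the same fiberwise ordering. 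Consequently $\Phi$ carries each $C_{\Phi^\ast\mathcal H}$-orbit onto a $C_{\mathcal H}$-orbit, respects the two equivalence relations, and descends to a homeomorphism $\bar\Phi:\mathcal T(\Phi^\ast\mathcal H)\to\mathcal T(\mathcal H)$ with $\bar\Phi\circ\Gamma_{\Phi^\ast\mathcal H}=\Gamma_{\mathcal H}\circ\Phi$; in particular the fibers of $\Gamma_{\Phi^\ast\mathcal H}$ are exactly the $\Phi^{-1}$-images of the fibers of $\Gamma_{\mathcal H}$.

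Finally I would assemble the isomorphism. The ordinary pull-back $\Phi^\ast=(\,\cdot\,)\circ\Phi$ is an algebra automorphism of $C^\infty(Y)$; since $\Phi$ sends fibers of $\Gamma_{\Phi^\ast\mathcal H}$ to fibers of $\Gamma_{\mathcal H}$, it carries functions constant on the former to functions constant on the latter, and symmetrically for $(\Phi^{-1})^\ast$. By the intrinsic characterization of the first paragraph, $\Phi^\ast$ therefore restricts to an algebra isomorphism $C^\infty_{\mathcal H}(Y)\to C^\infty_{\Phi^\ast\mathcal H}(Y)$ with inverse $(\Phi^{-1})^\ast$; equivalently, the square relating $\Gamma_{\mathcal H}^\ast$, $\Gamma_{\Phi^\ast\mathcal H}^\ast$, $\Phi^\ast$ and $\bar\Phi^\ast$ commutes, where $\bar\Phi^\ast$ preserves holographic smoothness because $(h\circ\bar\Phi)\circ\Gamma_{\Phi^\ast\mathcal H}=(h\circ\Gamma_{\mathcal H})\circ\Phi$ is smooth. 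I expect the only genuine work to be the verification of the conjugation identity for the causality maps, i.e.\ checking that the chart-by-chart construction of $C_{\mathcal H}$ in Lemma \ref{lem8.10} is natural with respect to $\Phi$, while the remaining steps are formal manipulations of quotient maps and pull-backs.
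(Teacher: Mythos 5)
Your proposal is correct and takes essentially the same route as the paper: both arguments rest on Lemma \ref{lem8.10} producing $\Gamma_{\mathcal H}$ (via the causality map) intrinsically from $\mathcal H$, together with the naturality identity relating $\Gamma_{\Phi^\ast\mathcal H}$, $\Gamma_{\mathcal H}$, and $\Phi$, which the paper records (in the slightly abusive form $\Phi \circ \Gamma_{\Phi^\ast\mathcal H} = \Gamma_{\mathcal H}$) and declares to be ``on the level of definitions.'' Your write-up simply supplies the details behind that declaration --- the fiberwise-constant characterization of $C^\infty_{\mathcal H}(Y)$ and the conjugation $C_{\Phi^\ast\mathcal H} = \Phi^{-1}\circ C_{\mathcal H}\circ \Phi$ with its descent $\bar\Phi$ to trajectory spaces --- without changing the underlying strategy.
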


\begin{proof} By Lemma \ref{lem8.10}, the obvious maps $\Gamma_{\mathcal H}: Y \to \mathcal T(\mathcal H)$ and $\Gamma_{\Phi^\ast\mathcal H}: Y \to \mathcal T(\Phi^\ast\mathcal H)$ are defined, via the proto-causality  maps $C_{\mathcal H}$ and $C_{\Phi^\ast\mathcal H}$, in terms of the structures $\mathcal H$ and $\Phi^\ast\mathcal H$, respectively. 

It is on the  level  of definitions to check that $\Phi \circ \Gamma_{\Phi^\ast\mathcal H} =  \Gamma_{\mathcal H}$. Therefore, using that $\Phi$ is a diffeomorphism, we get that $\Phi^\ast: C^\infty_\mathcal H(Y) \to C^\infty_{\Phi^\ast\mathcal H}(Y)$ is an isomorphism of algebras. 
\hfill \end{proof}
\smallskip

\begin{question}\label{q8.4} Does the pair of algebras $C^\infty_\mathcal H(Y) \subset C^\infty(Y)$ determine the holographic structure $\mathcal H$ on $Y$? \hfill $\diamondsuit$
\end{question}

\smallskip


Now let us combine Holography Theorem \ref{th1.1} with Theorem \ref{th8.7} into a single proposition. 

\begin{theorem}\label{th8.8}{\bf (The Holographic Principle)} \hfill\break
\begin{itemize}
\item For a traversally generic vector field $v$ on a (oriented) compact connected smooth manifold $X$, the topological type of $(X, \mathcal F(v))$ is  determined by the $v$-induced (orientably) fillable holographic structure $\mathcal H(v)$ on $\d X$. \smallskip

\item Conversely, any (orientably) fillable holographic structure $\mathcal H$ on a closed manifold $Y$ is of the form $\mathcal H(v)$ for a traversally generic field $v$ on some (orientable) manifold $X$ such that $\d X = Y$. Moreover, the topological type of the pair $(X, \mathcal F(v))$ is determined by $\mathcal H$. \smallskip

\item When, in addition, $v$ is as in the second bullet of Theorem \ref{th1.1}, then $\mathcal H(v)$ determines the smooth topological type of $(X, \mathcal F(v))$.\smallskip

\item Conversely, if a fillable holographic structure $\mathcal H$ on $Y$ is based on the combinatorial types $\omega \notin (33)_\succeq \cup (4)_\succeq$, then $\mathcal H$ determines the smooth topological type of the pair $(X, \mathcal F(v))$, where $\d X = Y$ and $\mathcal H(v) = \mathcal H$.
\end{itemize}
\end{theorem}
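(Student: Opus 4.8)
The plan is to obtain all four items by threading Theorem \ref{th8.7} and the Holography Theorem \ref{th1.1} together, using Lemma \ref{lem8.10} as the bridge. The underlying chain of passages is
$$\mathcal H \;\rightsquigarrow\; C_{\mathcal H} = C_v \;\rightsquigarrow\; (X,\mathcal F(v)),$$
where the first passage is the content of Lemma \ref{lem8.10} (a holographic structure reconstructs its own causality map, and this map coincides with the honest $C_v$, exactly as recorded in the proof of Corollary \ref{cor8.14}) and the second is the content of Theorem \ref{th1.1} (two realizations with conjugate causality maps are carried onto one another by a trajectory-preserving equivalence).

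First I would settle the formal bullets. For the first bullet, feed a traversally generic $v$ on $X$ to the first bullet of Theorem \ref{th8.7} to get $\mathcal H(v)$ on $Y=\d X$, recover $C_{\mathcal H(v)} = C_v$ from $\mathcal H(v)$ by Lemma \ref{lem8.10}, and apply the first bullet of Theorem \ref{th1.1} to conclude that $C_v$, hence $\mathcal H(v)$, pins down the topological type of $(X,\mathcal F(v))$. The second bullet runs the same chain backwards: the second bullet of Theorem \ref{th8.7} first manufactures some realization $v$ with $\mathcal H(v)=\mathcal H$ and $\d X \cong Y$, after which the previous argument shows the topological type of $(X,\mathcal F(v))$ is a function of $\mathcal H$ alone. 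The third bullet only upgrades the conclusion from topological to smooth: its extra hypothesis is exactly the hypothesis of the second bullet of Theorem \ref{th1.1}, which promotes the conjugating homeomorphism $\Phi$ to a diffeomorphism.

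The fourth bullet is the substantive one, and I do not expect it to follow formally from the third: a structure built on types avoiding $(33)_\succeq \cup (4)_\succeq$ can still carry trajectories --- for instance of type $(222)$, $(23)$ or $(32)$ --- that have no transversal point, are not simply tangent, and sit in codimension $3$, so neither bullet of Theorem \ref{th1.1} certifies smoothness at them outright. My plan is to realize $\mathcal H$ as $\mathcal H(v)$ by Theorem \ref{th8.7}, invoke the first bullet of Theorem \ref{th1.1} to know that $\Phi$ is already a diffeomorphism off a closed set of codimension $3$, and to note that the local reasoning behind the second bullet handles every trajectory possessing a transversal point. This reduces the problem to the strata lying over combinatorial types $\omega$ with all entries $\ge 2$ and reduced norm $|\omega|' \ge 3$. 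Over such a trajectory the map $\Phi$ is modeled by the fibre-preserving gluings $H_{\b,\b'}$ of the semi-algebraic blocks $\mathsf Z_\omega$ of Definition \ref{def8.7}, and smoothness of $\Phi$ there is equivalent to smoothness of these identifications across the tangency locus $\mathsf W_\omega$. Working in the normal forms supplied by the polynomials $P_\omega$ of (\ref{eq2.0}), the key step is to verify that this matching is automatically smooth for $(222)$, $(23)$, $(32)$ and for the other all-tangent patterns outside the two up-sets, while a genuine failure first appears at the even quartic contact $(4)$ and at the paired cubic contacts of $(33)$. Excluding $(33)_\succeq \cup (4)_\succeq$ then deletes every trajectory carrying the obstruction, so $\Phi$ becomes a global diffeomorphism.

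The hard part will be precisely this local bookkeeping: proving that $(4)$ and $(33)$ are the \emph{minimal} obstructing patterns --- heuristically, that the obstruction should be governed by the appearance of a single contact of even multiplicity $\ge 4$ or of two contacts of odd multiplicity $\ge 3$ --- and that all remaining all-tangent, codimension-$\ge 3$ types nevertheless reconstruct smoothly. I expect this to require tracking the causality map and its iterates through each contact and checking smoothness of the induced identifications on the blocks $\mathsf Z_\omega$ by hand, in the spirit of the smoothness analysis in the proof of Theorem 3.1 of \cite{K4}; the even top-multiplicity of $(4)$ and the two odd contacts of $(33)$ are exactly where that verification should break down.
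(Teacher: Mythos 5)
Your handling of the first three bullets coincides with the paper's own proof: both run the chain $\mathcal H \rightsquigarrow C_{\mathcal H}=C_v \rightsquigarrow (X,\mathcal F(v))$, with Theorem \ref{th8.7} producing or realizing $\mathcal H(v)$, Lemma \ref{lem8.10} recovering the causality map intrinsically, and Theorem \ref{th1.1} doing the (topological, resp.\ smooth) reconstruction. That part is fine.

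The gap is in the fourth bullet, and it begins with your premise for rejecting the formal reduction to the third bullet. The types you offer as obstructions are not realizable combinatorial types of trajectories of a traversing flow, so they never occur. For a traversing (boundary generic) field, a trajectory that is a segment must \emph{cross} $\d X$ at its first and its last intersection point: just before the first point the flow line of the extended field lies outside $X$ and just after it lies inside, so the multiplicity there is odd; at interior intersection points the flow line stays inside $X$, so those multiplicities are even; a singleton trajectory has type $(e)$ with $e$ even. This rules out $(23)$ and $(32)$ (odd total multiplicity: crossings must come in pairs, which is why $\mathbf\Omega^\bullet$ consists of patterns of even norm) and also $(22)$ and $(222)$ (even first entry). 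Consequently the realizable types violating the hypothesis of the second bullet of Theorem \ref{th1.1} --- no transversal point and not simply tangent --- are exactly the segments $(o_1,e_1,\dots,e_k,o_2)$ with both odd ends $\geq 3$, and the singletons $(e)$ with $e\geq 4$ even. Resolving every even entry to the empty cluster and each odd end to $3$ (resp.\ the even entry to $4$) exhibits every such type as a degeneration of $(33)$ (resp.\ of $(4)$); conversely any degeneration of $(33)$ or $(4)$ has no entries equal to $1$ and is not $(2)$. Here the subscript notation must be read as in the proof of Lemma \ref{lem8.10} (where $(\omega_\b)_\preceq$ denotes $\{\omega:\ \omega_\b\preceq\omega\}$): $(33)_\succeq=\{\omega:\ (33)\succeq\omega\}$ is the \emph{down}-set of $(33)$ in the order where larger reduced norm means smaller. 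Since trajectory types of any realizing $v$ lie above the chart types of $\mathcal H$, and the complement of a down-closed set is up-closed, ``$\mathcal H$ is based on types $\omega\notin(33)_\succeq\cup(4)_\succeq$'' is \emph{equivalent} to ``every trajectory of every realizing $v$ is transversal somewhere or simply tangent.'' So the fourth bullet does follow formally from the third; this one-line reduction is precisely the paper's proof. Your substitute --- a by-hand smoothness analysis of the gluings $H_{\b,\b'}$ over the all-tangent strata, in effect re-proving part of Theorem 3.1 of \cite{K4} --- is therefore unnecessary; and since you explicitly defer its key step (``the hard part will be precisely this local bookkeeping''), your proposal as written does not establish the fourth bullet.
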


\begin{proof} By Theorem \ref{th8.7}, any traversally  generic field $v$ on $X$ gives rise to a fillable holographic structure $\mathcal H(v)$ on $\d X$.

On the other hand, by the same Theorem \ref{th8.7}, any given fillable holographic structure $\mathcal H$  on a closed manifold $Y$ produces a traversally  generic vector field $v$ on some $X$ such that $\d X = Y$. 

By Lemma \ref{lem8.10}, the causality map $C_v$ can be described intrinsically in terms of $\mathcal H(v)$ as the map $C_{\mathcal H(v)}: \d_1^+X(\mathcal H(v)) \to \d_1^-X(\mathcal H(v))$.  By Holography Theorem \ref{th1.1},  $C_v$ allows for a reconstruction of the topological type of the pair $(X, \mathcal F(v))$. Therefore, $C_{\mathcal H(v)}$ does the same job.

Finally, $v$ is as in the second bullet of Theorem \ref{th1.1}, if and only if its trajectories have the combinatorial types $\omega \notin (33)_\succeq \cup (4)_\succeq$. Thus  if a fillable $\mathcal H$ is based on the combinatorial types $\omega \notin (33)_\succeq \cup (4)_\succeq$, then any $\mathcal H$-generated $v$ is as in the second bullet of Theorem \ref{th1.1} and hence its causality map $C_v$ allows for a reconstruction of the smooth type of $(X, \mathcal F(v))$. 
\hfill
\end{proof}

\section{Holographic Structures on Homotopy Spheres}

The smooth structures on homotopy spheres have been a favorite subject in topology for a number of years. To support the tradition, we will make few superficial observations about the holographic structures on \emph{homotopy spheres} and their relations to the underlying smooth structures. Stated loosely, our main remark here is that the universe of fillable holographic structures on smooth homotopy spheres maps surjectively on the universe the exotic spheres, so that the map is a homomorphism of the semigroups. \smallskip

In the end of Section 1, we have noticed that the universe of fillable holographic structures even on the standard sphere $S^n$ is at least as rich as the universe of smooth types of closed $(n+1)$-manifolds! \smallskip

Let us recall one old appealing result of Wall \cite{Wa}, the classification of smooth highly connected even-dimensional manifolds:

\begin{theorem}\label{th8.9}{\bf (Wall)} For $n \geq 3$ the diffeomorphism class of $(n - 1)$-connected $2n$-manifold $X$ with boundary a homotopy sphere are in natural bijections with the isomorphism classes of $\Z$-valued non-degenerate $(-1)^n$-symmetric bilinear forms with a quadratic refinement in $\pi_n(BSO(n))$\footnote{$\pi_n(BSO(n)) \approx \Z\; \;\text{if}\; n \equiv 0 (2)$, and $\pi_n(BSO(n)) \approx \Z_2\;  \;\text{if}\; n \equiv 1 (2)$.}. 
\hfill $\diamondsuit$
\end{theorem}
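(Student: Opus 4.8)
The plan is to reconstruct Wall's argument, which splits into three phases: extracting the algebraic invariants from a manifold, realizing every admissible invariant by a manifold, and proving that the invariant is a complete diffeomorphism invariant.

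\emph{Phase 1 (the invariants).} First I would record that, since $X$ is $(n-1)$-connected and $\d X$ is a homotopy sphere, Poincar\'e--Lefschetz duality forces $H_\ast(X;\Z)$ to vanish outside degrees $0$ and $n$, with $H_n(X;\Z)$ free abelian of some finite rank $r$. The intersection pairing $\lambda: H_n(X) \times H_n(X) \to \Z$ is then $(-1)^n$-symmetric, and its unimodularity (non-degeneracy over $\Z$) is exactly the statement that $\d X$ has the homology of a sphere. To define the quadratic refinement $q: H_n(X) \to \pi_n(BSO(n))$, I would represent each class by an embedded $n$-sphere (possible since $X$ is $(n-1)$-connected and $2n \geq 6$, via the Hurewicz theorem and the Whitney trick) and assign to it the element of $\pi_n(BSO(n))$ classifying its normal $n$-plane bundle; the compatibility $q(x+y) - q(x) - q(y) = \lambda(x,y)\cdot \tau$, where $\tau \in \pi_n(BSO(n))$ is the class of $TS^n$, follows by tracking how the normal bundles add when two representatives are tubed together at an intersection point.

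\emph{Phase 2 (realization / surjectivity).} Given an abstract unimodular $(-1)^n$-symmetric form with a quadratic refinement $q$, I would build $X$ by plumbing: take one $D^n$-bundle over $S^n$ for each basis vector, with twisting prescribed by the value of $q$, and plumb two of them once for each unit of $\lambda$ between the corresponding basis vectors. By construction the plumbed manifold is $(n-1)$-connected of rank $r$ with intersection data $(\lambda, q)$, and unimodularity of $\lambda$ guarantees that its boundary is a homology sphere; since $n \geq 3$ the boundary is simply connected, hence a homotopy sphere by the generalized Poincar\'e conjecture.

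\emph{Phase 3 (completeness / injectivity).} This is where the real work and the main obstacle lie. I would present $X$ as $D^{2n}$ with $r$ handles of index $n$ attached, using $(n-1)$-connectedness and $n \geq 3$ to trade away all handles of intermediate index by Morse-theoretic handle cancellation; the attaching and framing data of the surviving $n$-handles recovers precisely $(\lambda, q)$. Given two such manifolds with isomorphic invariants, I would realize the algebraic isomorphism geometrically: changes of basis in $H_n$ are implemented by handle slides, which act on $(\lambda, q)$ exactly through the symmetry group of the form, while the Whitney trick (again requiring $2n \geq 6$) removes the excess intersections created in the process. Once the handle presentations are matched, the two manifolds cobound an $h$-cobordism, and Smale's $h$-cobordism theorem (valid here, since the dimension is at least $6$ and everything is simply connected) upgrades the resulting homotopy equivalence to a diffeomorphism. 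The crux is the faithful dictionary between the automorphisms of $(\lambda, q)$ and the admissible geometric handle moves, together with the vanishing of the obstructions to the Whitney trick in the stated range $n \geq 3$.
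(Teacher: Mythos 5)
The paper itself offers no proof of this statement: it is quoted from Wall's 1962 Annals paper \cite{Wa} (hence the lone $\diamondsuit$ and the absence of a proof environment), and it serves in the text only as an input to Corollary \ref{cor8.16}. So there is no in-paper argument to compare yours against; what can be judged is whether your sketch is a sound reconstruction of Wall's theorem, and in outline it is. Your three phases --- homology concentrated in degree $n$ by Poincar\'e--Lefschetz duality, unimodularity of the $(-1)^n$-symmetric intersection form because $\d X$ is a homology sphere, the refinement defined by normal bundles of embedded spherical representatives with the addition formula $q(x+y)-q(x)-q(y)=\lambda(x,y)\cdot\tau$; realization of any admissible $(\lambda,q)$ by plumbing; and completeness via handle presentations with only $0$- and $n$-handles --- are exactly the architecture of Wall's argument.

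Two points deserve flagging. A minor one: to conclude that the boundary of the plumbing is a homotopy sphere you do not need the generalized Poincar\'e conjecture; a simply connected homology sphere is homotopy equivalent to $S^{2n-1}$ by the Hurewicz and Whitehead theorems. More substantively, your Phase 3 defers rather than supplies the real content: the ``faithful dictionary'' between automorphisms of $(\lambda,q)$ and geometric handle moves is the central lemma of Wall's paper, not a routine verification. One must prove that the attaching and framing data of the $n$-handles is determined up to isotopy and handle slides by $(\lambda,q)$ --- this is where $\pi_{n-1}(SO(n))\approx\pi_n(BSO(n))$ genuinely enters, and where the parity of $n$ (Euler-number refinement versus $\Z_2$-valued refinement) changes the analysis --- and that every isometry of the form preserving the refinement is so realized. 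You name this as ``the crux'' but give no argument for it, so as a proof the proposal is incomplete at precisely its load-bearing step. Note also a structural slip: once two handle presentations literally agree, the manifolds are diffeomorphic outright, so the concluding appeal to the $h$-cobordism theorem is redundant; Smale's machinery is needed earlier, in the step where intermediate-index handles are traded away to produce the $0$- and $n$-handle presentation.
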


\noindent{\bf Remark 3.1.} 
Recall that, if a homotopy sphere $\Sigma^{2n-1}$, $n \geq 3$, bounds a \emph{framed}  smooth $(2n)$-manifold $Y$, then by surgery, $\Sigma^{2n-1}$ bounds a framed $(n-1)$-connected manifold $X$ (cf. \cite{Kos}, Theorem 2.2 on page 201). Therefore, all such spheres satisfy the hypotheses of Theorem \ref{th8.9}.  \hfill $\diamondsuit$
\smallskip

Theorem \ref{th8.8} implies the following result.

\begin{corollary}\label{cor8.16} Let $\mathcal H^{\{\mathsf{fill}\}}\mathsf \Sigma_{2n-1}$ denote the set of orientably fillable holographic structures on smooth homotopy $(2n-1)$-spheres.
\smallskip

For $n \geq 3$, the set $\mathcal H^{\{\mathsf{fill}\}}\mathsf \Sigma_{2n-1}$ maps to the set of isomorphism classes of non-degenerate $\Z$-valued $(-1)^n$-symmetric bilinear forms on free finite-dimensional $\Z$-modules.

Any non-degenerate bilinear form $\Phi: M \otimes M \to \Z$ on a free $\Z$-module $M$, that admits a quadratic refinement $\mu$ as in $(\ref{eq8.13})$, is in the image of that map.
\end{corollary}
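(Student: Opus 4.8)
The plan is to manufacture the map out of the reconstruction supplied by Theorem \ref{th8.8}, and to identify its image via Wall's realization Theorem \ref{th8.9}. To build the map, I would start with an orientably fillable holographic structure $\mathcal H$ on a smooth homotopy sphere $\Sigma^{2n-1}$ and apply the second bullet of Theorem \ref{th8.8}: it produces a traversally generic pair $(X, v)$ with $\d X$ diffeomorphic to $\Sigma^{2n-1}$ and with the topological type of $X$ \emph{determined} by $\mathcal H$. Since $\mathcal H$ is orientably fillable, $X$ is orientable (Corollary \ref{cor8.13}), so the \emph{integral} middle intersection form is defined; I would assign to $\mathcal H$ the self-intersection pairing $\Phi_{\mathcal H}$ on the free $\Z$-module $M_{\mathcal H} =_{\mathsf{def}} H_n(X;\Z)/\mathrm{Tors}$. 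Its $(-1)^n$-symmetry is the usual symmetry of the intersection pairing in the middle dimension of a $2n$-manifold.

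Non-degeneracy I would extract from the homotopy-sphere boundary. Because $H_n(\Sigma^{2n-1}) = H_{n-1}(\Sigma^{2n-1}) = 0$ for $n \geq 2$, the long exact sequence of the pair $(X, \d X)$ makes the natural map $H_n(X) \to H_n(X, \d X)$ an isomorphism; composing it with Poincaré--Lefschetz duality $H_n(X,\d X) \cong H^n(X)$ and the universal-coefficient identification $H^n(X)/\mathrm{Tors} \cong \mathrm{Hom}(H_n(X), \Z)$ exhibits $\Phi_{\mathcal H}$ on $M_{\mathcal H}$ as the canonical unimodular pairing, in particular non-degenerate. Well-definedness of the isomorphism class is exactly the force of Theorem \ref{th8.8}: the homeomorphism type of $X$ depends only on $\mathcal H$, and the isomorphism class of the intersection form is a homeomorphism invariant. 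Thus $\mathcal H \mapsto \Phi_{\mathcal H}$ is a map into isomorphism classes of non-degenerate $(-1)^n$-symmetric forms on free finite-dimensional $\Z$-modules, as required.

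For surjectivity onto the forms admitting a quadratic refinement, I would run the construction backwards. Given a non-degenerate $(-1)^n$-symmetric $\Phi : M \otimes M \to \Z$ with a refinement $\mu$ as in $(\ref{eq8.13})$, I feed $(\Phi, \mu)$ into Wall's Theorem \ref{th8.9} to obtain an $(n-1)$-connected smooth $2n$-manifold $X$ with $\d X$ a homotopy sphere $\Sigma^{2n-1}$ and intersection form isomorphic to $\Phi$; here $H_n(X)$ is free, since $H_{n-1}(X) = 0$ kills the relevant $\mathrm{Ext}$-term in the universal-coefficient/duality identification. As $X$ is compact, connected, and $\d X \neq \emptyset$, it carries a traversing vector field (\cite{K1}), and by density of traversally generic fields (Theorem 3.5 of \cite{K2}) we may take $v$ traversally generic. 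The first bullet of Theorem \ref{th8.8} then yields an orientably fillable holographic structure $\mathcal H(v)$ on $\Sigma^{2n-1}$, and its image under the map above is the intersection form of the manifold reconstructed from $\mathcal H(v)$, which by Theorem \ref{th8.8} is $X$ up to homeomorphism; hence that form is $\Phi$, so $\Phi$ lies in the image.

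The main obstacle is not the homological bookkeeping but the compatibility of the two halves: I must ensure that the form attached to $\mathcal H(v)$ in the forward construction is computed from the \emph{same} $X$ that Wall's theorem returns, up to an invariance strong enough to preserve the integral intersection form. This is precisely why the argument must invoke the \emph{determinacy} clause of Theorem \ref{th8.8} (the topological type of $(X, \mathcal F(v))$ is pinned down by $\mathcal H$), not merely its existence clause; otherwise $\mathcal H \mapsto \Phi_{\mathcal H}$ would depend on reconstruction choices. A secondary point requiring care is that the reconstructed $X$ in the forward direction need not be $(n-1)$-connected, so $H_n(X)$ may carry torsion; the unimodularity of the pairing on its free part nonetheless follows from the exact-sequence argument above, which uses only that $\d X$ is a homotopy (indeed homology) sphere.
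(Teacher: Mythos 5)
Your proposal is correct and follows essentially the same route as the paper: the forward map is defined via the reconstruction/determinacy clause of Theorem \ref{th8.8} as the intersection form on $H_n(X;\Z)/\mathrm{Tor}$, and surjectivity is obtained by feeding $(\Phi,\mu)$ into Wall's Theorem \ref{th8.9} and equipping the resulting $(n-1)$-connected filling with a traversally generic field. The only differences are expository: you spell out the non-degeneracy via Poincar\'e--Lefschetz duality and the long exact sequence of $(X,\d X)$, and flag the possible torsion in $H_n(X;\Z)$, points the paper asserts or handles implicitly.
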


\begin{proof} Let $\mathcal H$ be an orientably fillable holographic structure on a homotopy sphere $\Sigma^{2n-1}$. By Theorem \ref{th8.8}, such $\mathcal H$ is of the form $\mathcal H(v)$ for some traversally  generic $v$ on an oriented smooth manifold $X$ whose boundary is $\Sigma^{2n-1}$.  Moreover, the topological type of $X$ is determined by $\mathcal H = \mathcal H(v)$. 

Let $$\Phi_\mathcal H: \big(H_n(X; \Z)/\textup{Tor}) \otimes \big(H_n(X; \Z)/\textup{Tor}\big) \to \Z$$ be the bilinear $(-1)^n$-symmetric intersection form on $X$. Since $\d X$ is a homotopy sphere, the form $\Phi_\mathcal H$ is non-degenerate over $\Z$.

Thus we have mapped any orientably fillable holographic structure $\mathcal H$ on a homotopy $(2n-1)$-sphere to the bilinear form $\Phi_\mathcal H$. \smallskip

Let us show that any bilinear $(-1)^n$-symmetric \emph{non-degenerate over} $\Z$ form $\Phi: M \otimes M \to \Z$ on a finitely-dimensional free $\Z$-module $M$ can be produced this way, provided that $\Phi$ is enhanced by a quadratic form $\mu: M \to \Z/\langle1 - (-1)^n\rangle$. Recall that the relations between $\Phi$ and $\mu$ are described by the formulas: 
\begin{eqnarray}\label{eq8.13}  
\Phi(a, a) & = & \mu(a) + (-1)^n \mu(a),  \nonumber \\
\Phi(a, b) & = & \mu(a+b) -\mu(a) -\mu(b)\;\;\; \textup{mod}\; \langle1 - (-1)^n\rangle 
\end{eqnarray}
for any $a, b \in M$. Note that the RHS of the formula for $\Phi(a, a)$ is well-defined as an element of $\Z$.

By Theorem \ref{th8.9}, any such pair $(\Phi, \mu)$ is realized  on some smooth orientable $(n - 1)$-connected $2n$-manifold $X$ which bounds  a homotopy sphere $\Sigma^{2n-1}$. For such $X$, $H_n(X; \Z) \approx M$ and $\Phi$ becomes the intersection form. The quadratic form $\mu: H_n(X; \Z) \to \Z/\langle1 - (-1)^n\rangle$ is defined by the self-intersections of immersed spheres $S^n \propto X$. 

By Corollary 4.1 from \cite{K1} and Theorem 3.5 from \cite{K2}, $X$ admits a traversally  generic vector field $v$. By Theorem \ref{th8.8}, $v$ induces an orientably fillable holographic  structure $\mathcal H(v)$ on $\Sigma^{2n-1}$. Therefore $\Phi = \Phi_{\mathcal H(v)}$.
\hfill \end{proof} 

Let $X_1$ bounds a homotopy $k$-sphere $\Sigma_1$, and $X_2$ a homotopy $k$-sphere $\Sigma_2$. Consider two traversally  generic fields: $v_1$ on $X_1$ and $v_2$ on $X_2$.  By attaching a $1$-handle $\mathbf H =_{\mathsf{def}} D^1 \times D^k$ to $\Sigma_1 \coprod \Sigma_2$, we form a new manifold $X =_{\mathsf{def}} X_1 \#_\d X_2$ that bounds the homotopy sphere $\Sigma =_{\mathsf{def}} \Sigma_1 \# \Sigma_2$. The $1$-surgery can be performed so that the fields $v_1$ and $v_2$ extend over the $1$-handle $\mathbf H$ to form a traversally  generic field $v$ on $X$ such that the following diffeomorphisms are valid: 
\begin{itemize}
\item $\d_1^+ X(v) \approx  \d_1^+ X_1(v_1) \coprod \big(\d_1^+ X_2(v_2) \setminus D^k\big)$, \smallskip

\item  $\d_1^- X(v) \approx  \big(\d_1^- X_1(v_1) \setminus D^k\big) \coprod \d_1^-X_2(v_2),$  \smallskip

\item $\d_2^- X(v) \approx  \d_j^- X_1(v_1) \coprod \d_j^- X_2(v_2)$,  \smallskip

\item $\d_2^+ X(v) \approx  \d_j^+X_1(v_1) \coprod \d_j^+ X_2(v_2) \coprod S^{k-1}$,  \smallskip

\item $\d_j^\pm X(v) \approx  \d_j^\pm X_1(v_1) \coprod \d_j^\pm X_2(v_2)$ for all $j > 2$. 
\end{itemize}

These formulas describe that the handle $\mathbf H$ has a bottleneck with respect to the $v$-flow. The construction of the field $v$ uses a $v_1$-adjusted regular neighborhood $U_1$  of a $v_1$-trajectory of the combinatorial type $(11)$ and a $v_2$-adjusted regular neighborhood $U_2$  of a $v_2$-trajectory of the combinatorial type $(11)$. Outside of $U_1\coprod U_2$ the field $v$ coincides with $v_1 \coprod v_2$.

This boundary connected sum construction produces a holographic cobordant to $\emptyset$ structure $\mathcal H(v) =_{\mathsf{def}} \mathcal H(v_1) \#_\d \mathcal H(v_2)$ on $\Sigma$. 

\begin{lemma}\label{lem8.11} Let $\mathcal H(v_i)$ be an orientably fillable holographic  structure on a homotopy \hfill\break $(2n-1)$-sphere $\Sigma_i$, $i = 1,2$. 

Then the bilinear $\Z$-form $\Phi_{\mathcal H(v)}$ that, according to Corollary \ref{cor8.16}, corresponds to the orientably fillable holographic structure $\mathcal H(v) =_{\mathsf{def}} \mathcal H(v_1) \#_\d \mathcal H(v_2)$ on the homotopy sphere $\Sigma =_{\mathsf{def}} \Sigma_1 \# \Sigma_2$ is the direct sum $\Phi_{\mathcal H(v_1)} \oplus \Phi_{\mathcal H(v_2)}$of the forms $\Phi_{\mathcal H(v_1)}$ and $\Phi_{\mathcal H(v_2)}$. \hfill\qed
\end{lemma}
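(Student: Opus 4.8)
The plan is to reduce the statement to the additivity of the intersection form under boundary connected sum. By Corollary \ref{cor8.16}, the form $\Phi_{\mathcal H(v)}$ is by definition the intersection form on $H_n(X;\Z)/\textup{Tor}$, where $X = X_1 \#_\d X_2$ is the manifold reconstructed from the fillable holographic structure $\mathcal H(v)$ via Theorem \ref{th8.8}. Since $\mathcal H(v)$ was produced precisely from the boundary connected sum construction gluing $v_1$ on $X_1$ and $v_2$ on $X_2$ across the single $1$-handle $\mathbf H = D^1 \times D^k$, the essential point is that the reconstructed $X$ is diffeomorphic to $X_1 \#_\d X_2$ and that this diffeomorphism type carries the orthogonal sum of the two intersection forms. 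So first I would invoke Theorem \ref{th8.8} to identify the topological (indeed smooth, since the combinatorial types involved in a $1$-handle are $(11)$ and so avoid $(33)_\succeq \cup (4)_\succeq$) type of $X$ with $X_1 \#_\d X_2$; this makes $\Phi_{\mathcal H(v)}$ literally the intersection form of the boundary connected sum.

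Next I would establish the homological decomposition. A boundary connected sum along a $1$-handle attached to the boundaries $\Sigma_1, \Sigma_2$ is homotopy equivalent to the wedge $X_1 \vee X_2$ in the relevant range: the $1$-handle $D^1 \times D^k$ is contractible and contributes nothing to $H_n$ for $n \geq 2$, so a Mayer--Vietoris argument gives $H_n(X;\Z) \approx H_n(X_1;\Z) \oplus H_n(X_2;\Z)$ (modulo torsion), with the isomorphism induced by the inclusions $X_i \hookrightarrow X$. The key compatibility is that under this splitting $M \approx M_1 \oplus M_2$, the classes coming from $X_1$ and the classes coming from $X_2$ can be represented by $n$-cycles supported in the interiors of $X_1$ and $X_2$ respectively, which are pushed apart by the bottleneck structure of $\mathbf H$ described in the bulleted list preceding the lemma.

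The core computation is then that the intersection pairing respects this splitting: for $a_1 \in M_1$ and $a_2 \in M_2$ we must have $\Phi_{\mathcal H(v)}(a_1, a_2) = 0$, while the pairing restricted to each $M_i$ recovers $\Phi_{\mathcal H(v_i)}$. The vanishing of the cross terms follows because representative cycles for $M_1$ and $M_2$ lie in the two separate pieces $X_1$ and $X_2$, which meet only along the $1$-handle $\mathbf H$; since $\mathbf H$ is a thickened arc of dimension $1 + k < 2n$, generic $n$-cycles from the two sides can be made disjoint there, so their algebraic intersection number is zero. That the self-pairings on each $M_i$ are unchanged follows because the intersection number of two $n$-cycles contained in the interior of $X_i$ is computed entirely within $X_i$ and is unaffected by the attachment of the handle. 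Combining these three facts yields $\Phi_{\mathcal H(v)} = \Phi_{\mathcal H(v_1)} \oplus \Phi_{\mathcal H(v_2)}$.

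The main obstacle I anticipate is making the identification $X \approx X_1 \#_\d X_2$ fully rigorous at the level required, namely verifying that the holographic structure $\mathcal H(v) = \mathcal H(v_1) \#_\d \mathcal H(v_2)$ reconstructed through Theorem \ref{th8.8} really is the boundary connected sum rather than merely abstractly filling to some oriented $X$ bounding $\Sigma_1 \# \Sigma_2$. This rests on the explicit handle-attachment construction preceding the lemma, where $v$ coincides with $v_1 \coprod v_2$ outside the neighborhoods $U_1, U_2$ of $(11)$-type trajectories; I would argue that the reconstruction $\mathcal H \mapsto (X, \mathcal F(v))$ is functorial enough that it sends the disjoint-union-plus-handle operation on holographic structures to the corresponding boundary connected sum on filling manifolds. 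Once that geometric identification is in hand, the homological additivity of the intersection form is standard, so the bulk of the work is bookkeeping rather than a genuinely hard estimate.
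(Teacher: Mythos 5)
Your proposal is correct and matches the paper's approach: the paper's own proof is a one-line assertion that, employing the construction from the proof of Corollary \ref{cor8.16} (i.e., computing the intersection form on the explicit filling $X_1 \#_\d X_2$ carrying the extended field $v$), the lemma holds at the level of definitions, and your Mayer--Vietoris plus disjoint-supports computation is exactly the standard argument that assertion leaves implicit. One parenthetical slip is harmless but worth noting: the smooth identification of $X$ does not follow from the types ``avoiding $(33)_\succeq \cup (4)_\succeq$'' (the fields $v_i$ may well have such trajectories away from the handle region), yet this is immaterial, since the topological type of $X$ already determines the intersection form.
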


\begin{proof} Employing the construction from the proof of Corollary \ref{cor8.16}, the validation of the lemma is at the level of definitions.
\hfill \end{proof}
\smallskip

We leave to the reader to discover a generalization of the boundary connected sum construction $\mathcal H(v) =_{\mathsf{def}} \mathcal H(v_1) \#_\d \mathcal H(v_2)$ for any pair of fillable holographic structures. This generalization will produce a holographic structure $\mathcal H =_{\mathsf{def}} \mathcal H_1 \#_\d \mathcal H_2$ on $\Sigma=_{\mathsf{def}} \Sigma_1 \# \Sigma_2$, utilizing two given holographic structures: $\mathcal H_1$ on $\Sigma_1$ and $\mathcal H_2$ on $\Sigma_2$. Each of these two structures $\mathcal H_i$ ($i = 1, 2$) is enhanced by a choice of a pair of ``base" points $s^+_i, s^-_i \in \Sigma_i$. Both points are contained in some element $U_i^{\textup{base}}$ of the combinatorial type $(11)$ from the atlas $\mathcal H_i$;  their images, $h_i^{\textup{base}}(s^+_i)$ and $h_i^{\textup{base}}(s^-_i)$, under the atlas homeomorphism $h_i^{\textup{base}}: U_i^{\textup{base}} \to \R^n \times \R$, project to the same point in $\R^n$.   \smallskip

Note that the trajectory space $\mathcal T(\mathcal H_1 \#_\d \mathcal H_2)$ is obtained from   the spaces $\mathcal T(\mathcal H_1)$  and $\mathcal T(\mathcal H_2)$ by gluing them along some $n$-disks $D^n_1$ and $D^n_2$, the disk $D^n_i$  resides in the maximal (i.e., indexed by $\omega = (11)$) stratum of $\mathcal T(\mathcal H_i)$.
\smallskip

The operation
\begin{eqnarray}\label{eq8.14} 
(\mathcal H_1; s^+_1, s^-_1),\; (\mathcal H_2; s^+_2, s^-_2)\; \Rightarrow \;(\mathcal H_1 \#_\d \mathcal H_2; s^+_1, s^-_2)
\end{eqnarray}
converts the set of all \emph{based} holographic structures on smooth homotopy $n$-spheres 
into a semigroup (monoid)  $\mathcal H\mathsf \Sigma_n$. The associativity of the operation in (\ref{eq8.14}) is evident.\smallskip

The order of two holographic structures $\mathcal H_1$ and $\mathcal H_2$ in the operation ``$\#_\d$" does matter: we think of $\Sigma_2$ as residing ``above" $\Sigma_1$, at least in the vicinity of points $s_1^- \in \Sigma_1$ and $s_2^+ \in \Sigma_2$ where the ``virtual" $1$-handle $\mathbf H$, responsible for the operation, is attached.

It seems that the holographic structure $\mathcal H_1 \#_\d \mathcal H_2$ indeed may depend on the choice of base pairs $(s^+_1, s^-_1)$ and $(s^+_2, s^-_2)$ and may change at least when they have been chosen in different connected components of the pure strata of the combinatorial type $(11)$ in $\Sigma_1$ or in $\Sigma_2$. 
\smallskip

The reader should be aware: now we are entering The Speculation Zone... \smallskip

Employing the connected sums of based holographic structures on homotopy $n$-spheres, we can form words, representing elements of $\mathcal H\mathsf \Sigma_n$ or of $\mathcal H^{\{\mathsf{fill}\}}\mathsf \Sigma_n$  in alphabets to be discovered and understood...  
\smallskip

We denote by $\mathcal HS_n$ the sub-semigroup of based holographic structures on the standard $n$-sphere $S^n$. 
We also will employ a smaller semigroup $\mathcal H^{\{\mathsf{fill}\}}S^n$ (still gigantic), based on fillable holographic structures $\mathcal H(v)$ on $S^n$. We denote by $\mathcal H^{\{\mathsf{ball-fill}\}}S^n$ sub-semigroup of holographic structures  that are induced by traversally  generic fields $v$ on the standard ball $B^{n+1}$. 
Unfortunately, we do not know much about ``size" of the quotients $\mathcal HS^n\big/ \mathcal H^{\{\mathsf{fill}\}}S^n$ or $\mathcal H^{\{\mathsf{fill}\}}S^n\big/\mathcal H^{\{\mathsf{ball-fill}\}}S^n$. We speculate that $\mathcal H^{\{\mathsf{fill}\}}S^n\big/\mathcal H^{\{\mathsf{ball-fill}\}}S^n$ is as rich as the set of smooth types of $(n+1)$-dimensional closed manifolds. 
\smallskip

Note that  $\mathcal HS_n$ acts, via the connected sum operation $\#_\d$, on $\mathcal H\mathsf \Sigma_n$ from the left and from the right, or rather from ``above"  and from ``below". Similarly, $\mathcal H^{\{\mathsf{fill}\}}S^n$ and $\mathcal H^{\{\mathsf{ball-fill}\}}S^n$ act, via the connected sum operation $\#_\d$, on $\mathcal H^{\{\mathsf{fill}\}}\mathsf \Sigma_n$ from the left and from the right.
\smallskip

Here are two informal questions that motivate us: \smallskip

\emph{What part of a given holographic structure on a closed manifold $Y$ can be localized to the ``spherical nuggets", representing an element of  $\mathcal HS^n$?}  
\smallskip

\emph{What part of a given fillable holographic structure on a closed manifold $Y$ can be localized to the ``spherical nuggets", representing an element of  $\mathcal H^{\{\mathsf{fill}\}}S_n$?} An element of $\mathcal H^{\{\mathsf{ball-fill}\}}S^n$? \smallskip

\begin{definition}\label{def8.9}  We introduce \emph{two} sets of relations among the elements of the semigroups $\mathcal H\mathsf \Sigma_n$, $\mathcal H^{\{\mathsf{fill}\}}\mathsf \Sigma_n$:  

\begin{itemize}
\item If a based holographic structure $\mathcal H$ on some homotopy $n$-sphere $\Sigma$ is a connected sum of a based holographic structure $\mathcal H'$ on $\Sigma$ and a based  holographic structure on $S^n$, then we declare $\mathcal H$ \emph{stably equivalent} to $\mathcal H'$.\smallskip

We denote by $\mathcal H\mathsf{\Sigma}_n^{\textup{st}}$ the set of based stably equivalent holographic structures on the homotopy $n$-spheres. 
\smallskip

\item Similarly, if a based fillable holographic structure $\mathcal H$ on some homotopy $n$-sphere $\Sigma$ is a connected sum of a based fillable holographic structure $\mathcal H'$ on a $\Sigma$ and a based (fillable) holographic structure on $S^n$ 
then we declare $\mathcal H$ \emph{stably equivalent} to $\mathcal H'$.\smallskip

We denote by $\mathcal H^{\{\mathsf{fill}\}}\mathsf{\Sigma}_n^{\textup{st}}$ the set of based strongly stably equivalent fillable holographic structures on the homotopy $n$-spheres. \hfill $\diamondsuit$
\end{itemize}
\end{definition}

The stable equivalence classes of based (fillable) holographic structures on homotopy spheres are amenable to the connected sum operation. Therefore, these equivalences give rise to the obvious epimorphisms of semigroups: 

\begin{eqnarray}\label{eq8.15}
\mathcal H\mathsf \Sigma_n \to \mathcal H\mathsf{\Sigma}_n^{\textup{st}}, \qquad
\mathcal H^{\{\mathsf{fill}\}}\mathsf \Sigma_n \to \mathcal H^{\{\mathsf{fill}\}}\mathsf{\Sigma}_n^{\textup{st}}
\end{eqnarray}

Let $\mathsf{\Theta}_n$ be the commutative semigroup of smooth homotopy $n$-spheres with the addition, induced by the connected sum operation. For $n \neq 4$, thanks to the $h$-cobordism theorem (see \cite{Mi}, \cite{KM}), $\mathsf{\Theta}_n$ is an abelian group. The role of opposite element $-[\Sigma^n]$ is played by the homotopy sphere $\Sigma^n$, taken with the opposite orientation.

\begin{lemma}\label{lem8.12} For $n \neq 4$, under the connected sum operation, the sets $\mathcal H\mathsf{\Sigma}_n^{\textup{st}}$ and $\mathcal H^{\{\mathsf{fill}\}}\mathsf{\Sigma}_n^{\textup{st}}$ are groups. 
\end{lemma}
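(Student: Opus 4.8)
The plan is to exhibit, for every based (fillable) holographic structure, a two-sided inverse under $\#_\d$ in the stabilized semigroup; since associativity and a unit are already available, this upgrades the monoids $\mathcal H\mathsf{\Sigma}_n^{\textup{st}}$ and $\mathcal H^{\{\mathsf{fill}\}}\mathsf{\Sigma}_n^{\textup{st}}$ to groups. First I would pin down the unit. By the very definition of stable equivalence, any based holographic structure on the \emph{standard} sphere $S^n$ equals $e\#_\d s$ with $s\in\mathcal HS^n$ and is therefore stably equivalent to the monoid unit $e$; hence the entire sub-semigroup $\mathcal HS^n$ (resp. $\mathcal H^{\{\mathsf{fill}\}}S^n$) collapses to a single class $[e]$, the identity of the stabilized semigroup. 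Consequently it suffices to produce, for each $[\mathcal H]$ represented on a homotopy sphere $\Sigma$, a structure $-\mathcal H$ with both $\mathcal H\#_\d(-\mathcal H)$ and $(-\mathcal H)\#_\d\mathcal H$ lying in $\mathcal HS^n$ (resp. $\mathcal H^{\{\mathsf{fill}\}}S^n$).

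Next I would construct the candidate inverse by orientation reversal. Given a based holographic atlas $\{U_\b,h_\b,h_{\b,\b'}\}$ on $\Sigma$, I reverse the ambient orientation of each building block $\R\times\R^n$; this flips the $\nabla P_\b$-coorientation of every $\mathsf W_\b$ and hence reverses the induced orientation of the underlying manifold, yielding a based holographic structure $-\mathcal H$ on the oppositely oriented homotopy sphere $-\Sigma$. Because the transition maps $h_{\b,\b'}$ were orientation preserving with respect to the original orientations, they remain orientation preserving after both source and target orientations are reversed, so $-\mathcal H$ is a legitimate oriented holographic structure. In the fillable case I would instead take a realization $\mathcal H=\mathcal H(v)$ on an oriented $(X,v)$ furnished by Theorem \ref{th8.8}, and observe that $(-X,v)$ is again an oriented manifold carrying the \emph{same} traversally generic field $v$, with $\d(-X)=-\Sigma$; its orientation-reversed fillability data supply an orientably fillable structure on $-\Sigma$ that agrees with $-\mathcal H$. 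Thus $-\mathcal H\in\mathcal H^{\{\mathsf{fill}\}}\mathsf{\Sigma}_n$ whenever $\mathcal H$ is.

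I would then compute the two products. By the boundary connected sum construction preceding Lemma \ref{lem8.11}, $\mathcal H\#_\d(-\mathcal H)$ is a based holographic structure on the homotopy sphere $\Sigma\#(-\Sigma)$ and, in the fillable case, is realized by a traversally generic field on $X\#_\d(-X)$, hence is orientably fillable. This is exactly where the hypothesis $n\neq 4$ enters: since $\mathsf{\Theta}_n$ is a group, the class $[-\Sigma]=-[\Sigma]$ satisfies $[\Sigma]+[-\Sigma]=0=[S^n]$, so $\Sigma\#(-\Sigma)$ is diffeomorphic to the \emph{standard} sphere $S^n$. Therefore $\mathcal H\#_\d(-\mathcal H)\in\mathcal HS^n$ (resp. $\mathcal H^{\{\mathsf{fill}\}}S^n$), giving $[\mathcal H]\cdot[-\mathcal H]=[e]$. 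Applying the same reasoning to $(-\Sigma)\#\Sigma\cong S^n$ yields $[-\mathcal H]\cdot[\mathcal H]=[e]$, so $[-\mathcal H]$ is a genuine two-sided inverse and both stabilized semigroups are groups.

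The main obstacle is the second step: verifying that the orientation-reversed atlas $-\mathcal H$ is a bona fide and, crucially, \emph{orientably fillable} holographic structure, and that the handle attachment producing $\mathcal H\#_\d(-\mathcal H)$ genuinely lands on the \emph{standard} $S^n$ rather than on a nontrivial element of $\mathsf{\Theta}_n$. The fillability of the reverse structure must be traced carefully through the orientation conventions of Definition \ref{def8.7}, namely the orientation-preserving extensions $H_{\b,\b'}$ and the positivity of $P_{\b'}\circ H_{\b,\b'}/P_\b$ recorded in Remark 2.3, to ensure these survive the simultaneous reversal of all ambient orientations. The identification $\Sigma\#(-\Sigma)\cong S^n$ is precisely the input that is unavailable at $n=4$, which is why the statement excludes that dimension; the dependence of $\#_\d$ on the chosen base pairs is harmless, since whatever the choices, the output is a structure on $S^n$ and therefore always represents the identity class $[e]$.
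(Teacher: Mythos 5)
Your proposal is correct and follows essentially the same route as the paper: the inverse of $[\mathcal H]$ is the orientation-reversed structure (the paper takes the same atlas on $\Sigma^n$ with the orientation of $\Sigma^n$ reversed, denoted $^\neg\mathcal H$), and the key input is the $h$-cobordism theorem giving $\Sigma^n \# (-\Sigma^n) \approx S^n$ for $n \neq 4$, so that the connected sum lands on the standard sphere and is therefore trivial in the stabilized semigroup. Your additional bookkeeping (identifying the unit class, checking two-sidedness, and tracing fillability through the orientation reversal via $(-X, v)$) only fills in details the paper leaves to the reader.
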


\begin{proof} Given a based holographic structure $(\mathcal H;\, s^+, s^-)$ on $\Sigma^n$, represented by an atlas $\mathcal A =_{\mathsf{def}} \{h_\b: U_\b \to \R^n \times \R\}$, consider the based holographic structure $(^\neg\mathcal H;\,  s^+, s^-)$ represented by the same atlas $\mathcal A$ on $\Sigma^n$ with the orientation of $\Sigma^n$ being reversed. The holographic structure $\mathcal H \#_\d\, ^\neg\mathcal H$ is defined on the homotopy sphere $\Sigma^n \# - \Sigma^n$. By the $h$-cobordism theorem, $\Sigma^n \# (- \Sigma^n) \approx S^n$ for any $n \neq 4$. Thus the holographic structure $\mathcal H \#_\d\, ^\neg\mathcal H$ is defined on $S^n$, and thus is trivial in $\mathcal H\mathsf{\Sigma}_n^{\textup{st}}$. 

The same argument applies to the based fillable holographic structures on the homotopy spheres.  
\hfill 
\end{proof}

\begin{corollary}\label{cor8.17} For $n \neq 4$, the forgetful map $\mathcal K_n^{\textup{st}}: \mathcal H^{\{\mathsf{fill}\}}\mathsf{\Sigma}_n^{\textup{st}} \to \mathsf{\Theta}_n$ that takes each pair \hfill \break (an exotic sphere, the stable equivalence class of a fillable holographic structure on it) to the exotic sphere is a group epimorphism.
\end{corollary}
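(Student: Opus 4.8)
The plan is to establish, in order, that $\mathcal K_n^{\textup{st}}$ is well-defined on stable equivalence classes, that it is a semigroup homomorphism (hence a group homomorphism, both sides being groups for $n \neq 4$ by Lemma~\ref{lem8.12} and the $h$-cobordism theorem), and finally that it is surjective.

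First I would dispose of well-definedness. By Definition~\ref{def8.9}, two based fillable structures are stably equivalent precisely when they differ by $\#_\d$ with a based holographic structure on the \emph{standard} sphere $S^n$; since $\Sigma^n \# S^n \approx \Sigma^n$, the underlying exotic sphere, and hence its class in $\mathsf{\Theta}_n$, is unchanged. Thus $[\Sigma^n]$ depends only on the stable class of $\mathcal H$, and $\mathcal K_n^{\textup{st}}$ is well-defined. For the homomorphism property I would invoke the boundary connected sum construction preceding Lemma~\ref{lem8.11}: by construction $\mathcal H_1 \#_\d \mathcal H_2$ is a fillable structure whose underlying homotopy sphere is $\Sigma_1 \# \Sigma_2$, and connected sum is exactly the operation in $\mathsf{\Theta}_n$. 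Hence $\mathcal K_n^{\textup{st}}([\mathcal H_1 \#_\d \mathcal H_2]) = [\Sigma_1 \# \Sigma_2] = [\Sigma_1] + [\Sigma_2]$, so $\mathcal K_n^{\textup{st}}$ respects the group laws. Both of these steps are formal, resting only on the definitions of $\#_\d$ and of stable equivalence together with Theorem~\ref{th8.8}, which guarantees that the underlying sphere is intrinsically recorded by the fillable structure.

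The substance of the corollary is surjectivity, and here I would argue as follows. Fix an arbitrary class $[\Sigma^n] \in \mathsf{\Theta}_n$ and a representative homotopy sphere $\Sigma^n$. Since every homotopy sphere is stably parallelizable, all of its Stiefel--Whitney and Pontryagin numbers vanish, so $[\Sigma^n] = 0$ in the oriented cobordism ring $\Omega_n^{SO}$; therefore $\Sigma^n = \d X$ for some compact oriented $(n+1)$-manifold $X$. By Corollary~4.1 of \cite{K1} together with Theorem~3.5 of \cite{K2}, $X$ admits a traversally generic vector field $v$; picking a trajectory that meets $\d X$ transversally (combinatorial type $(11)$) supplies a chart in which a pair of base points can be chosen, so $\mathcal H(v)$ is \emph{based}. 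By Theorem~\ref{th8.8} this $v$ induces an orientably fillable holographic structure $\mathcal H(v)$ on $\d X = \Sigma^n$, and by definition $\mathcal K_n^{\textup{st}}([\mathcal H(v)]) = [\Sigma^n]$. As $[\Sigma^n]$ was arbitrary, the map is onto.

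The hard part, such as it is, is the surjectivity step, and within it the single geometric input that \emph{every} homotopy sphere bounds an oriented manifold; once this is granted, the existence results of \cite{K1} and \cite{K2} and the Holographic Principle (Theorem~\ref{th8.8}) deliver a fillable structure mechanically, and nothing further is needed. I would note that this is also where the hypothesis $n \neq 4$ enters essentially, through Lemma~\ref{lem8.12} and the group structure on $\mathsf{\Theta}_n$; everything else in the argument is bookkeeping with the definitions.
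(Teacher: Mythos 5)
Your proposal is correct and follows essentially the same route as the paper: surjectivity via stable parallelizability of homotopy spheres, hence vanishing characteristic numbers, hence $\Sigma^n = \d X$ for an oriented $X$ (Conner--Floyd), then Theorem 3.5 of \cite{K2} to get a traversally generic $v$ and the induced based fillable structure $\mathcal H(v)$, with the homomorphism property coming from Lemma \ref{lem8.12} and the boundary connected sum construction. Your explicit treatment of well-definedness and of the compatibility of $\#_\d$ with the group law in $\mathsf{\Theta}_n$ merely spells out what the paper leaves as definitional bookkeeping.
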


\begin{proof} Smooth homotopy spheres are stably parallelizable manifolds (see \cite{Kos}, Corollary 8.6). So, their Pontrjagin  and Stiefel-Whitney characteristic classes vanish. Therefore any homotopy sphere $\Sigma^n$ bounds an oriented manifold $X$ (see \cite{CF}). 

By Theorem 3.5 from \cite{K2}, $X$ admits a traversally  generic vector field $v$ which generates a fillable holographic  structure $\mathcal H(v)$ on $\Sigma^n$. As a result, any homotopy sphere admits a fillable holographic structure. By piking a pair of based points, we convert this structure to a based one. Thus the forgetful map $\mathcal K_n^{\textup{st}}$ is surjective. By Lemma \ref{lem8.12}, it is a homomorphism of groups.
\hfill 
\end{proof}

Thanks to the pioneering works of Kervaire \& Milnor \cite{KM}, Browder \cite{Br}, and recently, of Hill, Ravenel, \& Hopkins \cite{HHR}, the groups $\mathsf{\Theta}_n$, $n \neq 4$, are well-understood\footnote{Presently, the only remaining ``problematic" dimensions are $n = 4$ and $n = 126$.} in terms of the stable homotopy groups of spheres $\pi_n^{\textup{st}}$ and of the homomorphism $$J: \pi_n(SO(N)) \to \pi_{n+N}(S^N),$$ where $N \gg n$. 

So, for $n \neq 4$, the group of stably equivalent holographic structures on homotopy $n$-spheres is at least as rich as its abelian image  $\mathsf{\Theta}_n$. \smallskip

Against the background of our investigations of traversally generic flows, the kernels of the forgetful maps $\mathcal K_n^{\textup{st}}: \mathcal H\mathsf{\Sigma}_n^{\textup{st}} \to \mathsf{\Theta}_n$ and $\mathcal K_n: \mathcal H\mathsf{\Sigma}_n \to \mathsf{\Theta}_n$ are interesting objects to study. \smallskip

Another challenging and interesting problem is to say something about the structure of the sizable quotient $\mathcal H^{\{\mathsf{fill}\}}S^n\big/\mathcal H^{\{\mathsf{ball-fill}\}}S^n$.


\begin{thebibliography}{30}

\bibitem [Bo]{Bo} Boardman, J. M., {\it Singularities of Differentiable Maps}, Publ. Math. I.H.E.S. 33 (1967), 21-57.

\bibitem [Br]{Br} Browder, W.,(1969), {\it The Kervaire invariant of framed manifolds and its generalization}, Annals of Mathematics 90 (1): 157�186.

\bibitem [CF]{CF} Conner, P.E., Floyd, E.E., {\it Differentiable Periodic Maps}, Springer-Verlag, 1964.

\bibitem [HHR]{HHR}  Hill, M., Hopkins, M., Ravenel, D., {\it On non-existence of elements of Kervaire invariant one}, arXiv:0908.3724 [math.AT].

\bibitem [K1]{K1} Katz, G., {\it Stratified  Convexity \& Concavity of Gradient Flows on Manifolds with Boundary}, Applied Mathematics, 2014, vol. 5, 2823-2848. \underline{http://www.scirp.org/journal/am}

\bibitem[K2]{K2} Katz, G., {\it  Traversally Generic \& Versal Flows: Semi-algebraic Models of Tangency to the Boundary}, Asian J. of Math., vol. 21, No. 1 (2017), 127-168 (arXiv:1407.1345v1 [mathGT] 4 July, 2014)).

\bibitem[K3]{K3} Katz, G., {\it The Stratified Spaces of Real Polynomials \& Trajectory Spaces of Traversing Flows}, JP J. of Geometry and Topology, v. 19, No. 2 (2016), 95-160 (arXiv:1407.2984v3 [mathGT] 6 Aug 2014).

\bibitem[K4]{K4} Katz, G., {\it Causal Holography of Traversing Flows}, arXiv:1409.0588v1 [mathGT] (2 Sep 2014).

\bibitem [KM]{KM} Kervaire, M. A., Milnor, J. W., {\it Groups of homotopy spheres: I}. Annals of Mathematics (Princeton University Press) 77 (3): 504�537 (1963).

\bibitem [Kos]{Kos} Kosinski, A., {\it Differential Manifolds}, Academic Press, Boston, San Diego, New York, London, Sydney, Tokyo, Toronto, 1992.

\bibitem [Mi]{Mi} Milnor, J., {\it Lectures on the $h$-cobordism Theorem}, Princeton University Press, Princeton, New Jersey, 1965.

\bibitem [Mo]{Mo} Morse, M., {\it Singular points of vector fields under  general boundary conditions}, Amer. J. Math. 51 (1929), 165-178.

\bibitem [Wa]{Wa} Wall, C.T.C., {\it Classification of $(n -1)$-connected $2n$-manifolds}, Ann. of Math. 75 (1962),163-189.

\end{thebibliography}
\end{document}